\begin{document}

\title*{A McKean optimal transportation perspective on Feynman-Kac formulae 
with application to data assimilation}
\titlerunning{A McKean optimal transportation perspective on data assimilation}
% Use \titlerunning{Short Title} for an abbreviated version of
% your contribution title if the original one is too long
\author{Yuan Cheng and Sebastian Reich}
% Use \authorrunning{Short Title} for an abbreviated version of
% your contribution title if the original one is too long
\institute{Yuan Cheng \at  Institut f\"ur Mathematik, Universit\"at Potsdam, Am Neuen Palais
10, D-14469 Potsdam, Germany, \email{yuan.cheng@uni-potsdam.de} \and
Sebastian Reich \at Institut f\"ur Mathematik, Universit\"at Potsdam, Am Neuen Palais
10, D-14469 Potsdam, Germany, and Department of Mathematics and Statistics, University of Reading,
Whiteknights, PO Box 220, Reading, RG6 6AX, UK, \email{sreich@math.uni-potsdam.de}
}
%
% Use the package "url.sty" to avoid
% problems with special characters
% used in your e-mail or web address
%
\maketitle

\date{26/09/2013}

\abstract{Data assimilation is the task of combining mathematical
  models with observational data. From a mathematical perspective data
assimilation leads to Bayesian inference problems which can be formulated in
terms of Feynman-Kac formulae. In this paper we focus on the sequential
nature of many data assimilation problems and their numerical
implementation in form of Monte Carlo methods. We demonstrate how
sequential data assimilation can be interpreted as
time-dependent Markov processes, which is often referred to as the
McKean approach to Feynman-Kac formulae. It is shown that the McKean
approach has very natural links to coupling of random variables and
optimal transportation. This link allows one to propose novel
sequential Monte Carlo methods/particle filters. In combination with
localization these novel algorithms have the
potential of beating the curse of dimensionality, which has prevented
particle filters from being applied to spatially extended systems.}

%%%%%%%%%%%%%%%%%%%%%%%%%%%%%%%%%%%%%%%%%%%%%%%%%%%%
%
%
%
%%%%%%%%%%%%%%%%%%%%%%%%%%%%%%%%%%%%%%%%%%%%%%%%%%%%%

\section{Introduction}
\label{sec:1}

This paper is concerned with Monte Carlo methods for approximating
expectation values for sequences of probability density functions
(PDFs) $\pi^n(z)$, $n\ge 0$, $z\in {\cal Z}$. We assume that these PDFs arise
sequentially from a Markov process with given transition kernel
$\pi(z|z')$ and are modified by weight functions $G^n(z)\ge0$ at each
iteration index $n\ge1$. More precisely, the PDFs satisfy the
recursion
\begin{equation} \label{sec1:FK}
\pi^{n}(z) = \frac{1}{C} G^n(z) \int_{\cal Z} \pi(z|z')
\pi^{n-1}(z'){\rm d}z'
\end{equation}
with the constant $C$ chosen such that $\int_{\cal Z} \pi^n(z){\rm d}z
= 1$.

A general mathematical framework for such problems is provided by the
Feynman-Kac formalism as discussed in detail in \cite{sr:DelMoral}.\footnote{The classic 
Feynman-Kac formulae provide a connection between stochastic processes and 
solutions to partial differential equations. Here
we use a generalization which links discrete-time stochastic processes to
sequences of marginal distributions and associated expectation values. In addition to 
sequential Bayesian inference, which primarily motivates this review article, 
applications of discrete-time Feynman-Kac formula of type (\ref{sec1:FK})
can, for example, be found in non-equilibrium molecular dynamics, where
the weight functions $G^n$ in (\ref{sec1:FK}) corresponds to the incremental work exerted on
a molecular system at time $t_n$. See \cite{sr:stoltz} for more details.}
In order to apply Monte Carlo methods to (\ref{sec1:FK}) it is useful
to reformulate (\ref{sec1:FK}) in terms of modified Markov
processes with transition kernel $\pi^n(z|z')$,  
which satisfy the consistency condition
\begin{equation} \label{sec1:comp}
\pi^{n}(z) = \int_{\cal Z} \pi^n(z|z') \pi^{n-1}(z'){\rm d}z'.
\end{equation}
This reformulation has been called the McKean approach to
Feynman-Kac models in  \cite{sr:DelMoral}.
\footnote{\cite{sr:McKean66} pioneered the study of stochastic processes
which are generated by stochastic differential equations for which the diffusion term depends
on the time-evolving marginal distributions $\pi(z,t)$. Here we utilize a generalization of this idea 
to discrete-time Markov processes which allows for transition kernels $\pi^n(z|z')$ to depend on the marginal distributions $\pi^n(z)$.} 
Once a particular McKean model is available, a Monte Carlo
implementation reduces to sequences of particles
$\{z_i^n \}_{i=1}^M$ being generated sequentially by
\begin{equation} \label{sec1:MC}
z_i^{n} \sim \pi^n(\cdot|z_i^{n-1}), \qquad i=1,\ldots,M,
\end{equation}
for $n = 0,1,\ldots,N$. In other words, $z^{n}_i$ is the realization of a random variable with
(conditional) PDF $\pi^n(z|z_i^{n-1})$. Such a Monte Carlo method
constitutes a particular instance of the far more general class of
sequential Monte Carlo methods (SMCMs) \citep{sr:Doucet}.

While there are many applications that naturally give rise to
Feynman-Kac formulae \citep{sr:DelMoral}, we will focus in this paper
on Markov processes for which the underlying transition kernel $\pi(z|z')$ is
determined by a deterministic dynamical system and that we wish to
estimate its current state $z^n$ from partial and noisy observations
$y^n_{\rm obs}$. The weight function $G^n(z)$ of a Feynman-Kac recursion (\ref{sec1:FK})
is in this case given by the likelihood of observing $y_{\rm obs}^n$
given $z^n$ and we encounter a particular application of 
\emph{Bayesian inference} \citep{sr:jazwinski,sr:stuart10a}.
The precise mathematical setting and the Feynman-Kac formula for the
associated data assimilation problem will be discussed in Section
\ref{sec:2}. Some of the standard Monte Carlo
approaches to Feynman-Kac formulae will be summarized in  Section \ref{sec:3}.

It is important to note that the consistency condition
(\ref{sec1:comp}) does not specify a McKean model uniquely. In other words,
given a Feynman-Kac recursion (\ref{sec1:FK}) there are many options to define an associated 
McKean model $\pi^n(z|z')$.  It has been suggested independently 
by \cite{sr:reich10,sr:cotterreich} and \cite{sr:marzouk11} in the
context of Bayesian inference that optimal transportation
\citep{sr:Villani} can be used to couple the prior and posterior
distributions. This idea generalizes to all Feynman-Kac formulae and leads
to optimal in the sense of optimal transportation McKean models. This
optimal transportation approach to McKean models will be developed in
detail in Section \ref{sec:4} of this paper. 

Optimal transportation problems lead to a nonlinear elliptic PDE,
called the Monge-Ampere equation \citep{sr:Villani}, which is very hard to tackle
numerically in space dimensions larger than one. On the other hand,
optimal transportation is an infinite-dimensional generalization 
\citep{sr:mccann95,sr:Villani2} of the classic linear transport
problem \citep{sr:strang}. This interpretation is very attractive in
terms of Monte Carlo methods and gives rise to a novel SMCM of type
(\ref{sec1:MC}), which we call the ensemble transform particle filter (ETPF)
\citep{sr:reich13}. The ETPF is based on a linear transformation of the forecast particles 
\begin{equation}\label{sec1:forecast}
z_i^f \sim \pi(\cdot|z_i^{n-1}), \qquad i = 1,\ldots,M,
\end{equation}
of type
\begin{equation} \label{sec1:trans}
z_j^{n} = \sum_{i=1}^M z_i^f s_{ij}
\end{equation}
with the entries $s_{ij}\ge 0$ of the transform matrix $S\in
\mathbb{R}^{M\times M}$ being determined by an appropriate linear
transport problem. Even more remarkably, it turns out that SMCMs which
resample in each iteration as well as the popular class of ensemble
Kalman filters (EnKFs) \citep{sr:evensen} also fit into the linear transform
framework of (\ref{sec1:trans}). We will discuss
particle/ensemble-based sequential data assimilation algorithms
within the unifying framework of linear ensemble transform filters in Section
\ref{sec:5}. This section also includes an extension to spatially extended
dynamical systems using the concept of localization \citep{sr:evensen}. 
Section \ref{sec:6} provides numerical results for the
Lorenz-63 \citep{sr:lorenz63} and the Lorenz-96 \citep{sr:lorenz96}
models. The results for the 40 dimensional Lorenz-96 indicate that the
ensemble transform particle filter
with localization can beat the curse of dimensionality which has so far
prevented SMCMs from being used for high-dimensional systems
\citep{sr:bengtsson08}. A brief historical account of data assimilation and
filtering is given in Section 7.

We mention that, while the focus of this paper is on state estimation for
deterministic dynamical systems, the results can easily be extended to
stochastic models as well as combined state and parameter estimation
problems. Furthermore, possible applications include
all areas in which SMCMs have successfully been used. We
refer, for example, to navigation,  computer vision, and cognitive
sciences (see, e.g., \cite{sr:Doucet,sr:mumford03} and references therein).

%%%%%%%%%%%%%%%%%%%%%%%%%%%%%%%%%%%%%%%%%%%%%
%
%
%%%%%%%%%%%%%%%%%%%%%%%%%%%%%%%%%%%%%%%%%%%%

\section{Data assimilation and Feynman-Kac formula}
\label{sec:2}

Consider a deterministic \emph{dynamical system}\footnote{Even though this review article assumes
deterministic evolution equations, the results presented here can easily 
be generalized to evolution equations with stochastic model errors.}
\begin{equation} \label{sec2:DS}
z^{n+1} = \Psi (z^n)
\end{equation}
with state variable $z\in \mathbb{R}^{N_z}$, iteration index $n\ge 0$
and given initial $z^0 \in {\cal Z} \subset \mathbb{R}^{N_z}$. We assume
that $\Psi$ is a diffeomorphism on $\mathbb{R}^{N_z}$ and that 
$\Psi({\cal Z}) \subseteq {\cal Z}$, which implies that the
iterates $z^n$ stay in ${\cal Z}$ for all $n\ge 0$. Dynamical systems 
of the form (\ref{sec2:DS}) often arise as the time-$\Delta t$-flow maps
of differential equations
\begin{equation} \label{sec2:ODE}
\frac{{\rm d}z}{{\rm d}t} = f(z).
\end{equation}

In many practical applications the initial state $z^0$ is not
precisely known. We may then assume that our uncertainty about the
correct initial state can, for example, be quantified in terms of 
ratios of frequencies of occurrence.  More precisely, the 
\emph{ratio of frequency of occurrence} (RFO) of two initial conditions $z_a^0 \in {\cal Z}$ and
$z_b^0 \in {\cal Z}$ is defined as the ratio of frequencies of occurrence for
the two associated $\varepsilon$-neighborhoods ${\cal U}_\varepsilon
(z_a^0)$ and ${\cal U}_\varepsilon (z_b^0)$, respectively, and taking
the limit $\varepsilon \to 0$. Here ${\cal U}_\varepsilon (z)$ is defined as
\[
{\cal U}_\varepsilon (z) = \{z' \in \mathbb{R}^{N_z}: \|z'-z\|^2 \le \varepsilon\}
\]
It is important to note that the volume of both neighborhoods are identical,
\emph{i.e.}, $V({\cal U}_\varepsilon(z_a^0)) = V({\cal U}_\varepsilon (z_b^0))$.

From a frequentist perspective, RFOs can be thought of as arising from repeated experiments 
with one and the same dynamical system (\ref{sec2:DS}) under varying initial conditions and 
upon counting how often $z^0 \in {\cal U}_\varepsilon(z^0_a)$ relative to $z^0 \in {\cal
  U}_{\varepsilon}(z^0_b)$. There will, of course, be many instances for which
  $z^0$ is neither in ${\cal U}_\varepsilon(z_a^0)$ nor ${\cal U}_\varepsilon (z_b^0)$.
   Alternatively, one can take a Bayesian perspective and think of RFOs as our subjective 
belief about a $z_a^0$ to actually arise as the
initial condition in (\ref{sec2:DS}) relative to another initial condition $z_b^0$. 
The later interpretation is also applicable in case only a single experiment 
with (\ref{sec2:DS}) is conducted.

Independent of such a statistical interpretation of the RFO,
we assume the availability of a function $\tau (z)>0$ such that the RFO can be expressed as
\begin{equation} \label{sec2:fittomodel}
\mbox{RFO} = \frac{\tau(z_a^0)}{\tau(z_b^0)}
\end{equation}
for all pairs of initial conditions from ${\cal Z}$. 
%The value $\tau(z^0)$ characterizes
%the \emph{frequency of occurrence} of a $z^0 \in {\cal Z}$.

%Since an initial condition $z^0$ together with a dynamical system
%(\ref{sec:DS}) uniquely determines the whole model trajectory $\{z^n\}_{n\ge 0}$, 
%the RFO can in this case also be interpreted as the \emph{ratio of fits to model} (RFM),
%which characterizes the relative frequency  of occurance of whole model trajectories 
%$\{z^n_a\}_{n\ge 0}$ and $\{z_b^n\}_{n\ge 0}$. Since the notion of RFM is more fundamental in the context %of general Markov processes we will RMF instead of RFO in all subsequent discussions.

Provided that
\[
\int_{\cal Z} \tau(z) {\rm d}z < \infty
\]
we can introduce the probability density function (PDF) 
\[
\pi_{Z^0}(z) = \frac{\tau(z)}{\int_{\cal Z} \tau(z) {\rm d}z}
\]
and interpret initial conditions $z^0$ as realizations of a random
variable $Z^0:\Omega \to \mathbb{R}^{N_z}$ with PDF
$\pi_{Z^0}$.\footnote{We have assumed the existence of an underlying
  probability space $(\Omega,{\cal F},\mathbb{P})$. The specific
  structure of this probability space does not play a role in the
  subsequent discussions.} We remark that most of our subsequent
discussions carry through even if $\int_{\cal
  Z}\tau(z) {\rm d}z$ is unbounded as long as the RFOs remain well-defined. 

So far we have discussed RFOs for initial conditions.
But one can also consider such ratios for any iteration index $n\ge 0$, \emph{i.e.},
for solutions
\[
z^n_a = \Psi^{n} (z^0_a)
\]
and
\[
z^n_b = \Psi^{n} (z^0_b).
\]
Here $\Psi^{n}$ denotes the $n$-fold application of
$\Psi$. The RFO at iteration index $n$ is now defined as the
ratio of frequencies of occurrence for the two associated $\varepsilon$-neighborhoods
${\cal U}_\varepsilon(z_a^n)$ and ${\cal U}_\varepsilon (z_b^n)$,
respectively, in the limit $\varepsilon \to 0$. We pull this ratio
back to $n=0$ and find that
\[
\mbox{RFO}(n) \approx \frac{ \tau(\Psi^{-n}(z^n_a))
  V_a}{\tau(\Psi^{-n}(z^n_b))V_b}
\]
for $\varepsilon$ sufficiently small,
where 
\[
V_{a/b} = V(\Psi^{-n}({\cal U}_\varepsilon (z_{a/b}^n))) :=
\int_{\Psi^{-n}({\cal U}_\varepsilon (z_{a/b}^n))} {\rm d}z
\]
denote the volumes of $\Psi^{-n}({\cal
  U}_\varepsilon(z^n_{a/b}))$ and $\Psi^{-n}$ refers to the inverse of
$\Psi^{n}$. These two volumes can be approximated as
\[
V_{a/b} \approx V({\cal U}_\varepsilon (z^0_{a/b}))
\times  |D\Psi^{-n}(z^n_{a/b})|
\]
for $\varepsilon>0$ sufficiently small.
Here $D\Psi^{-n}(z) \in \mathbb{R}^{N_z\times N_z}$ stands for the Jacobian matrix of
partial derivatives of $\Psi^{-n}$ at $z$ and $|D\Psi^{-n}(z)|$ for its determinant.
Hence, upon taking the limit $\varepsilon \to 0$, we obtain
\begin{align*}
\mbox{RFO}(n) &= \frac{\tau(\Psi^{-n}(z^n_a))
  |D\Psi^{-n}(z^n_a)|}{\tau(\Psi^{-n}(z^n_b))
  |D\Psi^{-n}(z^n_b)|} \\
&= \frac{\pi_{Z^0}(\Psi^{-n}(z^n_a))
  |D\Psi^{-n}(z^n_a)|}{\pi_{Z^0}(\Psi^{-n}(z^n_b))
  |D\Psi^{-n}(z^n_b)|}.
\end{align*}
Therefore we may interpret solutions $z^n$ for fixed iteration index
$n\ge 1$ as realizations of a random variable $Z^n:\Omega \to \mathbb{R}^{N_z}$ with PDF
\begin{equation} \label{sec2:marginal}
\pi_{Z^n}(z) = \pi_{Z^0}(\Psi^{-n}(z)) |D\Psi^{-n}(z)|.
\end{equation}
These PDFs can also be defined recursively using
\begin{equation} \label{sec2:Markov1}
\pi_{Z^{n+1}}(z) = \int_{\cal Z} \delta (z - \Psi(z'))\,\pi_{Z^n}(z')\,
{\rm d}z' .
\end{equation}
Here $\delta(\cdot)$ denotes the Dirac delta function, which satisfies
\[
\int_{\mathbb{R}^{N_z}} f(z) \delta (z-\bar z) {\rm d}z = f(\bar z)
\]
for all smooth functions $f:\mathbb{R}^{N_z} \to \mathbb{R}$. \footnote{The Dirac delta function
$\delta (z-\bar z)$ provides  a convenient short-hand for the point measure $\mu_{\bar z}({\rm d}z)$.}
In other words, the dynamical system (\ref{sec2:DS}) induces a \emph{Markov
process}, which we can also write as
\[
Z^{n+1} = \Psi(Z^n)
\]
in terms of the random variables $Z^n$, $n\ge 0$.

The sequence of random variables $\{Z^n\}_{n=0}^N$ for fixed $N\ge 1$ gives rise to the
finite-time stochastic process $Z^{0:N}:\Omega \to {\cal Z}^{N+1}$ with realizations
\[
z^{0:N} := (z^0,z^1,\ldots,z^N) = Z^{0:N}(\omega), \quad \omega \in \Omega,
\]
that satisfy (\ref{sec2:DS}). The joint distribution of
$Z^{0:N}$, denoted by $\pi_{Z^{0:N}}$, is formally\footnote{To be
  mathematically precise one should talk about the joint measure 
\[
\mu_{Z^{0:N}}({\rm d}z^0,\dots,{\rm d}z^N) = 
\mu_{Z^0}({\rm d}z^0)  \mu_{\Psi(z^0)}({\rm d}z^1) \cdots
\mu_{\Psi(z^{N-1})}({\rm d}z^N)
\]
with initial measure $\mu_{Z^0}({\rm d}z^0) = \pi_{Z^0}(z^0){\rm d}z^0$.}
given by
\begin{equation} \label{sec2:joint}
\pi_{Z^{0:N}}(z^0,\ldots,z^N)  = 
\pi_{Z^0}(z^0)\,\delta (z^1-\Psi(z^0)) \cdots
\delta (z^N -\Psi(z^{N-1}))
\end{equation}
and (\ref{sec2:marginal}) is the marginal of $\pi_{Z^{0:N}}$ in $z^n$, $n=1,\ldots,N$. 

Let us now consider the situation where (\ref{sec2:DS})
serves as a model for an unknown physical process with realization
\begin{equation} \label{sec2:truth}
z^{0:N}_{\rm ref} = (z^0_{\rm ref},z^1_{\rm ref},\ldots,z^N_{\rm ref}) .
\end{equation}
In the classic filtering/smoothing setting
\citep{sr:jazwinski,sr:crisan} one assumes that there 
exists an $\omega_{\rm ref} \in \Omega$ such that 
\[
z^{0:N}_{\rm ref} = Z^{0:N}(\omega_{\rm ref}).
\]
In practice such an assumption is highly unrealistic and 
the reference trajectory (\ref{sec2:truth}) may instead follow an iteration
\begin{equation} \label{sec2:DST}
z^{n+1}_{\rm ref} = \Psi_{\rm ref}(z^n_{\rm ref})
\end{equation}
with unknown initial $z^0_{\rm ref}$ and unknown $\Psi_{\rm ref}$. Of
course, it should hold that $\Psi$ in (\ref{sec2:DS}) is close to 
$\Psi_{\rm ref}$ in an appropriate mathematical sense.

Independently of such assumptions, we assume that $z^{0:N}_{\rm ref}$ is accessible to us through
partial and noisy observations of the form
\[
y^n_{\rm obs} = h(z_{\rm ref}^n) + \xi^n, \quad n=1,\ldots,N,
\]
where $h:{\cal Z}\to\mathbb{R}^{N_y}$ is called the \emph{forward} or {observation map} 
and the $\xi^n$'s are realizations of independent and identically
distributed Gaussian random variables with mean zero and 
covariance matrix $R \in \mathbb{R}^{N_y \times N_y}$. Estimating
$z_{\rm ref}^n$ from $y^n_{\rm obs}$ constitutes a classic inverse
problem \citep{sr:Tarantola}.

The \emph{ratio of fits to data} (RFD) of two realizations $z_a^{0:N}$ and $z_b^{0:N}$ from 
the stochastic process $Z^{0:N}$ is defined as
\[
\mbox{RFD} = \frac{\prod_{n=1}^N
  e^{-\frac{1}{2}(h(z^n_a)-y^n_{\rm obs})^T R^{-1}
      (h(z^n_a)-y_{\rm obs}^n) }}{\prod_{n=1}^N
e^{-\frac{1}{2}(h(z^n_b)-y^n_{\rm obs})^T R^{-1}
      (h(z^n_b)-y_{\rm obs}^n) }} .
\]
Finally we define the \emph{ratio of fits to model and data} (RFMD) 
of a $z_a^{0:N}$ versus a $z_b^{0:N}$ given the model and the
observations as follows:
\begin{align} \nonumber
\mbox{RFMD} &= \mbox{RFD} \times \mbox{RFO}(0)\\  \nonumber
&= \frac{\prod_{n=1}^N
  e^{-\frac{1}{2} (h(z^n_a)-y^n_{\rm obs})^T R^{-1}
      (h(z^n_a)-y_{\rm obs}^n) }}{\prod_{n=1}^N
e^{-\frac{1}{2} (h(z^n_b)-y^n_{\rm obs})^T R^{-1}
      (h(z^n_b)-y_{\rm obs}^n) }} 
\frac{\pi_{Z^0}(z_a^0)}{\pi_{Z^0}(z_b^0)}
 \\ \label{sec2:fitmd}
&=\frac{\prod_{n=1}^N
  e^{-\frac{1}{2}(h(z^n_a)-y^n_{\rm obs})^T R^{-1}
      (h(z^n_a)-y_{\rm obs}^n) }}{\prod_{n=1}^N
e^{-\frac{1}{2}(h(z^n_b)-y^n_{\rm obs})^T R^{-1}
      (h(z^n_b)-y_{\rm obs}^n) }} 
\frac{\pi_{Z^{0:N}}(z_a^{0:N})}{\pi_{Z^{0:N}}(z_b^{0:N})} .
\end{align}
The simple product structure arises since the uncertainty in the
initial conditions is assumed to be independent of the measurement errors
and the last identity follows from the fact that our dynamical model is deterministic. 

Again we may translate this combined ratio into a PDF
\begin{equation} \label{sec2:feynman}
\pi_{Z^{0:N}}(z^{0:N}|y_{\rm obs}^{1:N}) = 
\frac{1}{C} \prod_{n=1}^N
  e^{-\frac{1}{2} (h(z^n)-y^n_{\rm obs})^T R^{-1}
      (h(z^n)-y_{\rm obs}^n) } \,\pi_{Z^{0}}(z^0),
\end{equation}
where $C>0$ is a normalization constant depending only on $y_{\rm obs}^{1:N}$. 
This PDF gives the probability distribution in $z^{0:N}$ conditioned on the given set
of observations 
\[
y_{\rm obs}^{1:N} = (y^1_{\rm obs},\ldots,y_{\rm obs}^N).
\]
The PDF (\ref{sec2:feynman}) is, of course, also conditioned on (\ref{sec2:DS})
and the initial PDF $\pi_{Z^0}$. This dependence is not explicitly
taken account of in order to avoid additional notational clutter. 

The formulation (\ref{sec2:feynman}) is an instance of Bayesian
inference on the one hand, and an instance of the Feynman-Kac
formalism on the other. Within the Bayesian perspective,
$\pi_{Z^{0}}$ (or, equivalently, $\pi_{Z^{0:N}}$) represents the prior distribution, 
\[
\pi_{Y^{1:N}}({\rm y}^{1:n}|z^{0:n}) = \frac{1}{(2\pi)^{N_y N/2} |R|^{N/2}}
 \prod_{n=1}^N
  e^{-\frac{1}{2} (h(z^n)-y^n)^T R^{-1}
      (h(z^n)-y^n) } 
\]
the compounded likelihood function, and (\ref{sec2:feynman}) the posterior PDF given
an actually observed $y^{1:n} = y^{1:n}_{\rm obs}$. The Feynman-Kac formalism
is more general and includes a wide range of applications for which
an underlying stochastic process is modified by weights $G^n(z^n)\ge 0$.
These weights then replace the likelihood functions
\[
\pi_{Y}(y^n_{\rm obs}|z^n) = \frac{1}{(2\pi)^{N_y/2} |R|^{1/2}}  e^{-\frac{1}{2} (h(z^n)-
y^n_{\rm obs})^T R^{-1} (h(z^n)-y^n_{\rm obs}) } 
\]
in (\ref{sec2:feynman}). The functions $G^n:\mathbb{R}^{N_z} \to
\mathbb{R}$ can depend on the iteration index, as in
\[
G^n(z) := \pi_{Y}(y_{\rm obs}^n|z)
\]
or may be independent of the iteration index. See
\cite{sr:DelMoral} for further details on the Feynman-Kac formalism
and \cite{sr:stoltz} for a specific (non-Bayesian) application in the context of 
non-equilibrium molecular dynamics.

Formula (\ref{sec2:feynman}) is hardly ever explicitly accessible and
one needs to resort to numerical approximations whenever one wishes
to either compute the expectation value
\[
\mathbb{E}_{Z^{0:N}}[f(z^{0:N})|y_{\rm obs}^{1:N}] = \int_{{\cal Z}^{N+1}}
f(z^{0:N}) \,\pi_{Z^{0:N}}(z^{0:N}|y_{\rm obs}^{1:N}) {\rm d}z^0 \cdots {\rm d}z^N
\]
of a given function $f:{\cal Z}^{N+1} \to \mathbb{R}$ or the 
\[
\mbox{RFMD} = 
\frac{\pi_{Z^{0:N}}(z_a^{0:N}|y_{\rm
    obs}^{1:N})}{\pi_{Z^{0:N}}(z_b^{0:N}|y_{\rm obs}^{1:N})}
\]
for given trajectories $z^{0:N}_a$ and $z^{0:N}_b$. Basic Monte Carlo
approximation methods will be discussed in Section \ref{sec:3}. Alternatively, one may seek the
\emph{maximum a posteriori (MAP) estimator} $z^0_{\rm MAP}$, which is defined as the
initial condition $z^0$ that maximizes (\ref{sec2:feynman}) or, formulated alternatively,
\[
z^0_{\rm MAP} = \mbox{arg} \,\inf L(z^0) ,\qquad L(z^0) := -\log \pi_{Z^{0:N}}(z^{0:N}|y_{\rm obs}^{1:N}) 
\]
\citep{sr:kaipio,sr:Lewis,sr:Tarantola}. The MAP estimator is closely related to \emph{variational 
data assimilation techniques}, such as 3D-Var and 4D-Var, widely used in meteorology \citep{sr:daley,sr:kalnay}. 

In many applications expectation values need to be computed for functions $f$ which
depend only on a single $z^n$. Those expectation values can be obtained by first
integrating out all components in (\ref{sec2:feynman}) except for $z^n$. We denote the
resulting marginal PDF by $\pi_{Z^n}(z^n|y_{\rm obs}^{1:N})$. The case $n=0$ plays 
a particular role since
\[
\mbox{RFMD} = 
\frac{\pi_{Z^{0}}(z_a^{0}|y_{\rm
    obs}^{1:N})}{\pi_{Z^{0}}(z_b^{0}|y_{\rm obs}^{1:N})}
\]
and
\[
\pi_{Z^n}(z^n|y_{\rm obs}^{1:N}) = \pi_{Z^0}(\Psi^{-n}(z^n)|y_{\rm obs}^{1:N})
|D\Psi^{-n}(z^n)|
\]
for $n=1,\ldots,N$.
These identities hold because our dynamical system (\ref{sec2:DS}) is deterministic and invertible. 
In Section \ref{sec:4} we will discuss recursive approaches for
determining the marginal $\pi_{Z^N}(z^N|y_{\rm obs}^{1:N})$ in terms of Markov
processes. Computational techniques for implementing such recursions will be discussed 
in Section \ref{sec:5}.

%We finally mention that the recursion (\ref{sec2:Markov1}) fits into
%the general framework of Markov processes
%defined by
%\[
%\mu_{Z^{n+1}}({\rm d}z) = \int_{\cal Z} \mu({\rm dz}|z')
%\,\mu_{Z^n}({\rm d}z')
%\]
%with Markov transition kernel $\mu_{\rm mode}({\rm d}z|z')$ given by
%\[
%\mu_{\rm model}({\rm d}z|z') = \delta (z-\Psi(z')){\rm d}z.
%\]
%Here the PDFs $\pi_{Z^n}$ have also been replaced by probability measures
%$\mu_{Z^n}$, $n\ge 0$. The Feynman-Kac formula (\ref{sec2:feynman}) can be
%generalized appropriately and most techniques to be discussed in the
%chapter apply also to such a more general setting. 

%%%%%%%%%%%%%%%%%%%%%%%%%%%%%%%%%%%%%%%%%%%%
%
%
%
%
%%%%%%%%%%%%%%%%%%%%%%%%%%%%%%%%%%%%%%%%%%%

\section{Monte Carlo methods in path space}
\label{sec:3}

In this section, we briefly summarize two popular Monte Carlo
strategies for computing expectation values with respect to the 
complete conditional distribution $\pi_{Z^{0:N}}(\cdot|y_{\rm
  obs}^{1:N})$. We start with the classic importance sampling Monte
Carlo method.

\subsection{Ensemble prediction and importance sampling}

Ensemble prediction is a Monte Carlo method for assessing the marginal PDFs
(\ref{sec2:marginal}) for $n=0,\ldots,N$. One first generates $z_i^0$, $i=1,\ldots,M$,
independent samples from the initial PDF $\pi_{Z^0}$. Here samples are generated
such that the probability of being in ${\cal U}_\varepsilon(z)$ is 
\[
\int_{{{\cal U}_\varepsilon}(z)} \pi_{Z^0}(z') {\rm d}z' \approx
V({\cal U}_\varepsilon (z)) \times \pi_{Z^0}(z).
\]
Since samples $z_i^0$ are generated such that their RFOs are all equal, 
it follows that their ratio of fits to model is identical to one. Furthermore, the expectation value
of a function $f$ with respect to $Z^0$ is approximated by the
familiar empirical estimator
\[
\bar f_M := \frac{1}{M} \sum_{i=1}^M f(z_i^0).
\]

The initial ensemble $\{z_i^0\}_{i=1}^M$ is propagated independently under
the dynamical system (\ref{sec2:DS}) for $n=0,\ldots,N-1$. This yields $M$ trajectories
\[
z_i^{0:N} = (z_i^0,z_i^1,\ldots,z_i^N),
\]
which provide independent samples from the $\pi_{Z^{0:N}}$ distribution.  With each of these samples we
associate the weight
\[
w_i = \frac{1}{C} 
\prod_{n=1}^N e^{-\frac{1}{2}  (h(z_i^n)-y_{\rm obs}^n)^T R^{-1} (h(z_i^n)-y_{\rm obs}^n) }
\]
with the constant of proportionality chosen such that $\sum_{i=1}^M w_i = 1$.

The ratio of fits to model and data for any pair of samples 
$z_i^{1:N}$ and $z_j^{1:N}$ from $\pi_{Z^{0:N}}$ is now simply given by $w_i/w_j$
and the expectation value of a function $f$ with respect to
$\pi_{Z^{0:N}}(z^{0:N}|y_{\rm obs}^{1:N})$ 
can be approximated by the empirical estimator
\[
\bar f = \sum_{i=1}^M w_i f(z^{0:N}_i) .
\]
This estimator is an instance of \emph{importance sampling} since samples from a distribution 
different from the target distribution, here $\pi_{Z^{0:N}}$,  are used to approximate the
statistics of the target distribution, here $\pi_{Z^{0:N}}(\cdot|y_{\rm
    obs}^{1:N})$. See \cite{sr:liu} and \cite{sr:Robert2004} for further details.

\subsection{Markov chain Monte Carlo (MCMC) methods}

Importance sampling becomes inefficient whenever the \emph{effective sample size}
\begin{equation} \label{sec3:SS}
M_{\rm eff} = \frac{1}{\sum_{i=1}^M w_i^2} \in [1,M]
\end{equation}
becomes much smaller than the sample size $M$. Under those circumstances it
can be preferable to generate dependent samples $z_i^{0:N}$ using 
MCMC methods. MCMC methods rely on a proposal step and a 
Metropolis-Hastings acceptance criterion. Note that only $z_i^0$ needs to be stored since the whole trajectory
is then uniquely determined by $z^n_i = \Psi^n(z_i^0)$. Consider, for
simplicity, the reversible proposal step
\[
z^0_p = z_i^0 + \xi,
\]
where $\xi$ is a realization of a random variable $\Xi$ with PDF $\pi_{\Xi}$ satisfying 
$\pi_{\Xi}(\xi) = \pi_{\Xi}(-\xi)$ and $z^0_i$ denotes the last accepted sample. The associated 
trajectory $z_p^{0:N}$ is computed using (\ref{sec2:DS}). Next one evaluates
the ratio of fits to model and data (\ref{sec2:fitmd}) for $z_a^{0:N} = z_p^{0:N}$ 
versus $z_b^{0:N} = z_i^{0:N}$, \emph{i.e.},
\[
\alpha := \frac{\prod_{n=1}^N
  e^{-\frac{1}{2} (h(z^n_p)-y^n_{\rm obs})^T R^{-1}
      (h(z^n_p)-y_{\rm obs}^n) }}{\prod_{n=1}^N
e^{-\frac{1}{2} (h(z^n_i)-y^n_{\rm obs})^T R^{-1}
      (h(z^n_i)-y_{\rm obs}^n) }} 
\frac{\pi_{Z^0}(z_p^0)}{\pi_{Z^0}(z_i^0)} .
\]
If $\alpha\ge 1$, then the proposal is accepted and the new sample for the initial condition 
is $z^0_{i+1} = z^0_p$. Otherwise the proposal is accepted with probability $\alpha$ and 
rejected with probability $1-\alpha$. In case of rejection one sets $z_{i+1}^0 = z_i^0$. 
Note that the accepted samples follow the $\pi_{Z^0}(z^0|y_{\rm obs}^{1:N})$ distribution
and not the initial PDF $\pi_{Z^0}(z^0)$.

A potential problem with MCMC methods lies in low acceptance rates and/or highly dependent
samples. In particular, if the distribution in $\pi_\Xi$ is too narrow then exploration of
phase space can be slow while a too wide distribution can potentially lead to low acceptance
rates. Hence one should compare the effective sample size (\ref{sec3:SS}) from an importance
sampling approach to the effective sample size of an MCMC implementation, which is 
inversely proportional to the integrated autocorrelation time of the samples. 
See \cite{sr:liu} and \cite{sr:Robert2004} for more details.

We close this section by referring to \cite{sr:sarkka} for further algorithms
for filtering and smoothing.  

%%%%%%%%%%%%%%%%%%%%%%%%%%%%%%%%%%%%%%%%%%%
%
%
%
%%%%%%%%%%%%%%%%%%%%%%%%%%%%%%%%%%%%%%%%%

\section{McKean optimal transportation approach}
\label{sec:4}

We now restrict the discussion of the Feynman-Kac formalism to the 
marginal PDFs $\pi_{Z^n}(z^n|y_{\rm obs}^{1:N})$. We have already seen in
Section \ref{sec:2} that the marginal PDF with $n=N$ plays a particularly important
role. We show that this marginal PDF can be recursively defined starting from
the PDF $\pi_{Z^0}$ for the initial condition $z^0$ of (\ref{sec2:DS}). For that reason
we introduce the forecast and analysis PDF at iteration index $n$ and denote them
by $\pi_{Z^n}(z^n|y_{\rm obs}^{1:n-1})$ and $\pi_{Z^n}(z^n|y_{\rm obs}^{1:n})$, 
respectively. Those PDFs are defined recursively by
\begin{equation} \label{sec4:Step1}
\pi_{Z^n}(z^n|y_{\rm obs}^{1:n-1}) = \pi_{Z^{n-1}}(\Psi^{-1}(z^n)|y_{\rm obs}^{1:n-1})|
D\Psi^{-1}(z^n)|
\end{equation}
and
\begin{equation} \label{sec4:Step2}
\pi_{Z^n}(z^n|y_{\rm obs}^{1:n}) = \frac{\pi_Y(y_{\rm obs}^n|z^n) 
\pi_{Z^n}(z^n|y_{\rm obs}^{1:n-1}) }{\int_{\cal Z} 
\pi_Y(y_{\rm obs}^n|z) 
\pi_{Z^n}(z|y_{\rm obs}^{1:n-1}) {\rm d}z}.
\end{equation}
Here (\ref{sec4:Step1}) simply propagates the analysis from index $n-1$ to
the forecast at index $n$ under the action of the dynamical system (\ref{sec2:DS}). 
Bayes' formula is then applied in (\ref{sec4:Step2}) in order to transform the forecast into the analysis
at index $n$ by assimilating the observed $y_{\rm obs}^n$. 

\begin{theorem}
Consider the sequence of forecast and analysis PDFs defined by the recursion 
(\ref{sec4:Step1})-(\ref{sec4:Step2}) for $n=1,\ldots,N$ 
with $\pi_{Z^0}(z^0)$ given. Then the analysis PDF at $n=N$ is equal to 
the Feynman-Kac PDF (\ref{sec2:feynman}) marginalized down to the single variable 
$z^N$.
\end{theorem}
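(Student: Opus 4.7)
My plan is induction on $n$, showing that for each $n=0,1,\ldots,N$ the marginal in $z^n$ of the ``truncated'' Feynman-Kac PDF
\begin{equation*}
\pi_{Z^{0:n}}(z^{0:n}|y_{\rm obs}^{1:n}) = \frac{1}{C_n}\,\pi_{Z^0}(z^0) \prod_{k=1}^n \delta(z^k - \Psi(z^{k-1}))\,\pi_Y(y_{\rm obs}^k|z^k)
\end{equation*}
coincides with the analysis PDF $\pi_{Z^n}(z^n|y_{\rm obs}^{1:n})$ produced by the recursion (\ref{sec4:Step1})--(\ref{sec4:Step2}). The base case $n=0$ is immediate since both sides equal $\pi_{Z^0}(z^0)$, and the claimed identity in the theorem is the case $n=N$.

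For the inductive step from $n-1$ to $n$ I split the transition into two substeps that mirror (\ref{sec4:Step1}) and (\ref{sec4:Step2}) respectively. First, extending the joint conditional on $z^{0:n-1}$ to one on $z^{0:n}$, while still conditioning only on $y_{\rm obs}^{1:n-1}$, merely inserts the factor $\delta(z^n - \Psi(z^{n-1}))$; since this factor integrates to one, the normalizing constant is unaffected. Using Fubini and the Dirac change-of-variables identity $\delta(z^n - \Psi(z^{n-1})) = |D\Psi^{-1}(z^n)|\,\delta(z^{n-1} - \Psi^{-1}(z^n))$ (valid because $\Psi$ is a diffeomorphism), the integral over $z^{0},\ldots,z^{n-1}$ collapses onto $z^{n-1} = \Psi^{-1}(z^n)$ and yields exactly the forecast formula (\ref{sec4:Step1}) applied to the inductive hypothesis. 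This is the same pushforward computation that transforms (\ref{sec2:Markov1}) into (\ref{sec2:marginal}) earlier in the paper, which I can simply cite.

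Second, assimilating the new observation $y_{\rm obs}^n$ multiplies the joint PDF by $\pi_Y(y_{\rm obs}^n|z^n)$ and renormalizes. Because this likelihood depends on $z^n$ alone, it can be pulled out of the integrals over $z^0,\ldots,z^{n-1}$, so marginalization commutes with the Bayes multiplication. The renormalized marginal in $z^n$ is therefore precisely (\ref{sec4:Step2}) applied to the forecast marginal obtained in the first substep, closing the induction.

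There is no serious obstacle; the one subtle point is the rigorous treatment of $\delta(z^n - \Psi(z^{n-1}))$, which one handles either via pushforward measures as sketched in the footnote following (\ref{sec2:joint}), or, more economically, by citing the deterministic pushforward identity (\ref{sec2:marginal}) already established in Section \ref{sec:2}. Everything else is bookkeeping of normalization constants, which automatically takes care of itself since at each step the Bayes denominator in (\ref{sec4:Step2}) agrees with the ratio $C_n/C_{n-1}$ of successive truncated Feynman-Kac normalizers.
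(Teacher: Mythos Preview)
Your proposal is correct and follows essentially the same inductive strategy as the paper's own proof: both argue by induction, splitting the step from $n-1$ to $n$ into the deterministic pushforward under $\Psi$ (yielding the forecast formula) followed by the Bayes multiplication by $\pi_Y(y_{\rm obs}^n|z^n)$ (yielding the analysis formula). The only cosmetic differences are that the paper starts the induction at $N=1$ rather than $n=0$ and computes the pushforward directly rather than invoking the Dirac change-of-variables identity, but the substance is identical.
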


\begin{proof}
We prove the theorem by induction in $N$. We first verify the claim for $N=1$. Indeed, by definition,
\begin{align*}
\pi_{Z^{1}}(z^1|y_{\rm obs}^1) &= \frac{1}{C_1} 
\int_{\cal Z} \pi_{Y}(y_{\rm obs}^1|z^1) \delta(z^1-\Psi(z^0)) \pi_{Z^0}(z^0) {\rm d}z^0 \\
&= \frac{1}{C_1} \pi_{Y}(y_{\rm obs}^1|z^1) \pi_{Z^0}(\Psi^{[-1]}(z^1))|D\Psi^{-1}(z^1)|\\
&= \frac{1}{C_1} \pi_{Y}(y_{\rm obs}^1|z^1) \pi_{Z^1}(z^1)
\end{align*}
and $\pi_{Z^1}(z^1)$ is the forecast PDF at index $n=1$. Here $C_1$ denotes the constant
of proportionality which only depends on $y_{\rm obs}^1$. 

The induction step from $N$ to $N+1$ follows from
the following line of arguments. We know by the induction assumption that the marginal
PDF at $N+1$ can be written as
\begin{align*}
\pi_{Z^{N+1}}(z^{N+1}|y_{\rm obs}^{1:N+1}) &= \int_{\cal Z}
\pi_{Z^{N:N+1}}(z^{N:N+1}|y_{\rm obs}^{1:N+1}) {\rm d}z^N\\
&= \frac{1}{C_{N+1}} \pi_Y(y_{\rm obs}^{N+1}|z^{N+1}) \int_{\cal Z} \delta (z^{N+1}-\Psi(z^N))
\pi_{Z^N}(z^N|y_{\rm obs}^{1:N}) {\rm d}z^N\\
&= \frac{1}{C_{N+1}} \pi_Y(y_{\rm obs}^{N+1}|z^{N+1}) \pi_{Z^{N+1}}(z^{N+1}|y_{\rm obs}^{1:N})
\end{align*}
in agreement with (\ref{sec4:Step2}) for $n=N+1$. Here
$C_{N+1}$ denotes the constant of proportionality that depends only on
$y_{\rm obs}^{N+1}$ and we have made use of the fact that the forecast PDF at index $n = N+1$
is, according to (\ref{sec4:Step1}), defined by 
\[
\pi_{Z^{N+1}}(z^{N+1}|y_{\rm obs}^{1:N}) = \pi_{Z^{N}}(\Psi^{-1} (z^{N+1})|y_{\rm obs}^{1:N})|
D\Psi^{-1} (z^{N+1})|. 
\] 
\hfill $\qed$
\end{proof}

While the forecast step (\ref{sec4:Step1}) is in the form of a Markov
process with transition kernel
\[
\pi_{\rm model}(z|z') = \delta(z-\Psi(z')),
\]
this does not hold for the analysis step (\ref{sec4:Step2}).  The
\emph{McKean approach} to (\ref{sec4:Step1})-(\ref{sec4:Step2}) is based on introducing
Markov transition kernels $\pi_{\rm data}^n(z|z')$, $n=1,\ldots,N$, 
for the analysis step (\ref{sec4:Step2}). In other words, the transition kernel $\pi_{\rm data}^n$ 
at iteration index $n$ has to satisfy the consistency condition
\begin{equation} \label{sec4:consistency} 
\pi_{Z^n}(z|y_{\rm obs}^{1:n}) = 
\int_{\cal Z} \pi_{\rm data}^n(z|z') \pi_{Z^n}(z'|y_{\rm obs}^{1:n-1})
{\rm d}z' .
\end{equation}
These transition kernels are not unique. The following kernel
\begin{equation} \label{sec4:DelMoral}
\pi_{\rm data}^n(z|z')  = \epsilon \pi_{Y}(y^n_{\rm obs}|z')
\delta(z-z') + \left(1-\epsilon \pi_{Y}(y^n_{\rm obs}|z')
\right) \pi_{Z^n}(z|y_{\rm obs}^{1:n})
\end{equation}
has, for example, been considered in \cite{sr:DelMoral}. Here $\epsilon \ge 0$ has
to be chosen such that 
\[
1-\epsilon \pi_{Y}(y^n_{\rm obs}|z) \ge 0
\]
for all $z\in \mathbb{R}^{N_z}$. Indeed, we find that
\begin{align*}
\int_{\cal Z} \pi_{\rm data}(z|z') \pi_{Z^n}(z'|y_{\rm obs}^{1:n-1}){\rm d}z' &=
\pi_{Z^n}(z|y_{\rm obs}^{1:n}) + \epsilon 
\pi_Y(y^n_{\rm obs}|z)\pi_{Z^n}(z|y_{\rm obs}^{1:n-1}) \,\,- \\
& \qquad \qquad \qquad  \epsilon \pi_{Z^n}(z|y^{1:n}_{\rm obs}) 
\pi_Y(y^n_{\rm obs}|y_{\rm obs}^{1:n-1}) \\
& = \pi_{Z^n}(z|y_{\rm obs}^{1:n}).
\end{align*}
The intuitive interpretation of this transition kernel is that one stays at $z'$ with probability
$p= \epsilon \pi_{Y}(y^n_{\rm obs}|z')$ while with probability
$(1-p)$ one samples from the analysis PDF $\pi_{Z^n}(z|y_{\rm obs}^{1:n})$.

Let us define the combined McKean-Markov transition kernel 
\begin{align} \nonumber
\pi^n(z^n|z^{n-1}) &:= \int_{\cal Z} 
\pi_{\rm data}^n(z^n|z)
\pi_{\rm model}(z|z^{n-1}) {\rm d}z \\ \nonumber 
&=  \int_{\cal Z} \pi_{\rm data}^n(z^n|z)
\delta  (z-\Psi(z^{n-1})) {\rm d}z \\
&= \pi_{\rm data}^n(z^n|\Psi(z^{n-1})) \label{sec3:kernel}
\end{align}
for the propagation of the analysis PDF from iteration index $n-1$ to
$n$. The combined McKean-Markov transition kernels $\pi^n$, 
$n=1,\ldots,N$, define a finite-time stochastic process $\hat Z^{0:N}
= \{\hat Z^n\}_{n=0}^N$ with $\hat Z^0 = Z^0$. The marginal PDFs
satisfy
\[
\pi_{{\hat Z}^n}(z^n|y_{\rm obs}^{1:n}) = 
\int_{\cal Z} \pi^n(z^n|z^{n-1}) \pi_{{\hat Z}^{n-1}}
(z^{n-1}|y_{\rm obs}^{1:n-1}) {\rm d}z^{n-1}.
\]

\begin{corollary}
The final time marginal distribution $\pi_{Z^N}(z^N|y_{\rm obs}^{1:N})$ of the Feynman-Kac
formulation (\ref{sec2:feynman}) is identical to the final time marginal distribution 
$\pi_{\hat Z^N}(z^N|y_{\rm obs}^{1:N})$ of the finite-time stochastic process $\hat Z^{0:N}$ 
induced by the McKean-Markov transition kernels $\pi^n(z^n|z^{n-1})$.
\end{corollary}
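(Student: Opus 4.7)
My plan is a straightforward induction on $n$, showing that $\pi_{\hat Z^n}(z^n|y_{\rm obs}^{1:n}) = \pi_{Z^n}(z^n|y_{\rm obs}^{1:n})$ for all $n=0,1,\ldots,N$, where the right-hand side is the analysis PDF defined by the recursion (\ref{sec4:Step1})--(\ref{sec4:Step2}). Once this is established, the corollary follows immediately from the preceding theorem, which identifies this analysis PDF at $n=N$ with the marginal of the Feynman-Kac formula (\ref{sec2:feynman}).

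For the base case $n=0$ there is nothing to do, since by construction $\hat Z^0 = Z^0$ and both sides reduce to the prior $\pi_{Z^0}$. For the induction step, assume the identity holds at $n-1$. I would start from the defining recursion for the marginals of $\hat Z^{0:N}$, substitute the explicit form of the combined kernel from (\ref{sec3:kernel}), namely $\pi^n(z^n|z^{n-1}) = \pi_{\rm data}^n(z^n|\Psi(z^{n-1}))$, and then change variables $z = \Psi(z^{n-1})$ in the integral. The Jacobian factor $|D\Psi^{-1}(z)|$ combined with $\pi_{\hat Z^{n-1}}(\Psi^{-1}(z)|y_{\rm obs}^{1:n-1})$ reproduces exactly the forecast PDF $\pi_{Z^n}(z|y_{\rm obs}^{1:n-1})$ by (\ref{sec4:Step1}) and the induction hypothesis. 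Finally, applying the McKean consistency condition (\ref{sec4:consistency}) to $\pi_{\rm data}^n$ collapses the remaining integral to the analysis PDF $\pi_{Z^n}(z^n|y_{\rm obs}^{1:n})$, completing the induction.

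There is no genuine obstacle here; the work is purely bookkeeping. The only step that requires mild care is the change of variables under $\Psi$: one must use that $\Psi$ is a diffeomorphism (assumed in Section \ref{sec:2}) so that $\Psi^{-1}$ and $|D\Psi^{-1}|$ are well-defined, and then verify that the push-forward of the analysis at time $n-1$ under $\Psi$ matches the forecast PDF at time $n$ as prescribed by (\ref{sec4:Step1}). After that, invoking (\ref{sec4:consistency}) is immediate, and the theorem closes the argument at $n=N$.
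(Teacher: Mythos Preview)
Your proposal is correct and is exactly the argument the paper leaves implicit: the corollary is stated without proof, as an immediate consequence of the preceding theorem together with the consistency condition (\ref{sec4:consistency}) built into the McKean kernels. Your induction on $n$, using the change of variables under the diffeomorphism $\Psi$ to recover the forecast PDF and then invoking (\ref{sec4:consistency}) to pass to the analysis PDF, is precisely the bookkeeping needed to make that implicit step explicit.
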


We summarize our discussion on the McKean approach 
in terms of analysis and forecast random variables, which constitute 
the basic building blocks for most current sequential data assimilation methods. 

\begin{definition}
Given a dynamic iteration (\ref{sec2:DS}) with PDF $\pi_{Z^0}$ for the initial conditions and 
observations $y_{\rm obs}^n$, $n=1,\ldots,N$, the McKean approach leads to a recursive 
definition of forecast $Z^{n,f}$ and analysis
$Z^{n,a}$ random variables. The iteration is started by declaring $Z^0$ the analysis $Z^{0,a}$ at 
$n=0$. The forecast at iteration index $n>0$ is defined by propagating the analysis at $n-1$ 
forward under the dynamic model (\ref{sec2:DS}), \emph{i.e.},
\begin{equation} \label{sec4:McKean1}
Z^{n,f} = \Psi(Z^{n-1,a}).
\end{equation}
The analysis $Z^{n,a}$ at iteration index $n$ is defined by applying the 
transition kernel $\pi_{\rm data}^{n}(z|z')$
to $Z^{n,f}$. In particular, if $z^{n,f} = Z^{n,f}(\omega)$ is a realized forecast at 
iteration index $n$, then the analysis is distributed according
to
\begin{equation} \label{sec4:McKean2}
Z^{n,a}|z^{n,f} \sim \pi_{\rm data}^{n}(z|z^{n,f}).
\end{equation}
If the marginal PDFs of $Z^{n,f}$ and $Z^{n,a}$ are denoted by $\pi_{Z^{n,f}}(z)$
and $\pi_{Z^{n,a}}(z)$, respectively, then the transition kernel
$\pi_{\rm data}^n$ has to satisfy the compatibility condition
(\ref{sec4:consistency}), \emph{i.e.},
\begin{equation} \label{sec4:compatible}
\int_{\cal Z} \pi_{\rm data}^n (z|z') \pi_{Z^{n,f}}(z'){\rm d}z' = 
\pi_{Z^{n,a}}(z)
\end{equation}
with
\[
\pi_{Z^{n,a}}(z) = \frac{\pi_Y(y_{\rm obs}^n|z) \pi_{Z^{n,f}}(z)}{\int_{\cal Z}
\pi_Y(y_{\rm obs}^n|z) \pi_{Z^{n,f}}(z){\rm d}z}.
\]
\end{definition}

The data related transition step (\ref{sec4:McKean2}) introduces randomness into the 
analysis of a given forecast value. This appears counterintuitive and, indeed, the main
purpose of the rest of this section  is to demonstrate that the transition kernel 
$\pi^n_{\rm data}$ can be chosen such that
\begin{equation} \label{sec4:McKean2b}
Z^{n,a} = \nabla_z \phi^{n}(Z^{n,f}),
\end{equation}
where $\phi^n:\mathbb{R}^{N_z} \to \mathbb{R}$ is an appropriate convex potential. In other words, 
the data-driven McKean update step can be reduced to a (deterministic) map
and the stochastic process $\hat Z^{0:N}$ is induced by the
deterministic recursion (or dynamical system)
\[
\hat Z^n = \nabla_z \phi^n(\Psi( \hat Z^{n-1}))
\]
with $\hat Z^0 = Z^0$. 

The compatibility condition (\ref{sec4:compatible}) with $\pi_{\rm data}^n(z|z')
= \delta (z-\nabla_z \psi^n(z'))$ reduces to
\begin{equation} \label{sec4:Monge}
\pi_{Z^{n,a}}(\nabla_z \psi^n(z)) |D\nabla_z \phi^n(z)| = 
\pi_{Z^{n,f}}(z),
\end{equation}
which constitutes a highly non-linear elliptic PDE for the potential
$\phi^n$.  In the remainder of this section we discuss under which
conditions this PDE has a solution. This discussion will also guide us
towards a numerical approximation technique that circumvents the 
need for directly solving (\ref{sec4:Monge}) either analytically or numerically.

Consider the forecast PDF $\pi_{Z^{n,f}}$ and the analysis PDF
  $\pi_{Z^{n,a}}$ at iteration index $n$. For simplicity of notion we
  drop the iteration index and simply write
$\pi_{Z^f}$ and $\pi_{Z^a}$, respectively. 

\begin{definition} A \emph{coupling} of
$\pi_{Z^f}$ and $\pi_{Z^a}$ consists of a pair
  $Z^{f:a} = (Z^f,Z^a)$ of random variables such that $Z^f \sim
  \pi_{Z^f}$, $Z^a \sim \pi_{Z^a}$, and $Z^{f:a} \sim
  \pi_{Z^{f:a}}$. The joint PDF $\pi_{Z^{f:a}}(z^f,z^a)$ on the product space
  $\mathbb{R}^{N_z}\times
\mathbb{R}^{N_z}$, is called the {\it transference plan} for
  this coupling. The set of all transference plans is denoted by
  $\Pi(\pi_{Z^f},\pi_{Z^a})$.\footnote{Couplings should be properly
    defined in terms of probability measures. A coupling between two
measures $\mu_{Z^f}({\rm d}z^f)$ and $\mu_{Z^a}({\rm d}z^a)$ consists
of a pair of random variables with joint measure $\mu_{Z^{f:a}}({\rm
  d}z^f,{\rm d}z^a)$ such that $\mu_{Z^f}({\rm d}z^f) = \int_{\cal Z} 
\mu_{Z^{f:a}}({\rm d}z^f,{\rm d}z^a)$ and $\mu_{Z^a}({\rm d}z^a) = \int_{\cal
  Z} \mu_{Z^{f:a}}({\rm d}z^f,{\rm d}z^a)$, respectively.} 
\end{definition}

Clearly, couplings always exist since one can use the trivial product coupling
\[
\pi_{Z^{f:a}}(z^f,z^a) = \pi_{Z^f}(z^f) \pi_{Z^a}(z^a),
\]
in which case the associated random variables $Z^f$ and $Z^a$ are
independent and each random variable follows its given marginal
distribution. Once a coupling has been found, a McKean transition
kernel is determined by
\[
\pi_{\rm data}(z|z') = \frac{\pi_{Z^{f:a}}(z',z)}{\int_{\cal Z}
  \pi_{Z^{f:a}}(z',z''){\rm d}z''}  .
\]
Reversely, any transition kernel $\pi_{\rm data}(z|z')$,
such as (\ref{sec4:DelMoral}), also induces a coupling. 

 A diffeomorphism $T: {\cal Z} \to {\cal Z}$ is called a \emph{transport map} if the induced random
  variable $Z^a= T(Z^f)$ satisfies
\[
\int_{{\cal Z}} f(z^a) \pi_{Z^a}(z^a){\rm d}z^a = \int_{{\cal Z}} f(T(z^f))
\pi_{Z^f}(z^f){\rm d}z^f 
\]
for all suitable functions $f:{\cal Z} \to \mathbb{R}$. The associated coupling 
\[
\pi_{Z^{f:a}}(z^f,z^a) = \delta (z^a - T(z^f))
  \pi_{Z^f}(z^f)
\]
is called a \emph{deterministic coupling}. Indeed, one finds that
\[
\int_{\cal Z} \pi_{Z^{f:a}}(z^f,z^a){\rm d}z^a = \pi_{Z^f}(z^f)
\]
and
 \[
\pi_{Z^a}(z^a) = \int_{\cal Z} \pi_{Z^{f:a}}(z^f,z^a){\rm d}z^f = \pi_{Z^f}(T^{-1}(z^a)) |DT^{-1}(z^a)|,
\]
respectively.

When it comes to actually choosing a particular coupling from the set
$\Pi(\pi_{Z^f},\pi_{Z^a})$ of all admissible ones, it appears preferable to
pick the one that maximizes the covariance or correlation 
between $Z^f$ and $Z^a$. But maximizing their covariance for given marginals has an important
geometric interpretation.  Consider, for simplicity, univariate random variables 
$Z^f$ and $Z^a$, then  
\begin{align*}
\mathbb{E}_{Z^{f:a}}[|z^f-z^a |^2] &= \mathbb{E}_{Z^f}[|z^f|^2] + 
\mathbb{E}_{Z^a}[|z^a |^2] - 2 \mathbb{E}_{Z^{f:a}}[z^f z^a] \\
&= \mathbb{E}_{Z^f}[|z^f|^2] + 
\mathbb{E}_{Z^a}[|z^a|^2] - 2 \mathbb{E}_{Z^{f:a}}[(z^f-\bar z^f)
(z^a-\bar z^a)] -2 \bar z^f \bar z^a \\
&= \mathbb{E}_{Z^f}[|z^f|^2] + 
\mathbb{E}_{Z^a}[|z^a|^2] -2\bar z^f \bar z^a - 2 \mbox{cov}(Z^f,Z^a),
\end{align*}
where $\bar z^{f/a} = \mathbb{E}_{Z^{f/a}}[z^{f/a}]$.
Hence finding a joint PDF $\pi_{Z^{f:a}} \in \Pi(\pi_{Z^f},\pi_{Z^a})$ that minimizes the expectation
of $|z^f-z^a|^2$ simultaneously maximizes the covariance between
$Z^f$ and $Z^a$. This geometric interpretation leads to the celebrated
Monge-Kantorovitch problem.

%%%%%%%%%%%%%%%%%%%%%%%%%%%%%%%%%%%

\begin{definition}
Let $\Pi(\pi_{Z^f},\pi_{Z^a})$ denote the set of all possible
couplings between $\pi_{Z^f}$ and $\pi_{Z^a}$.
A transference plan
  $\pi_{Z^{f:a}}^\ast \in \Pi(\pi_{Z^f},\pi_{Z^a})$ is called the solution to
  the \emph{Monge-Kantorovitch problem} with cost function $c(z^f,z^a)
  = \|z^f-z^a\|^2$ if
\begin{equation} \label{sec4:eq:MK}
\pi_{Z^{f:a}}^\ast = \arg \inf_{\pi_{Z^{f:a}}\in \Pi(\pi_{Z^f},\pi_{Z^a})} \mathbb{E}_{Z^{f:a}} [ \|z^f-z^a\|^2].
\end{equation}

The associated functional $W(\pi_{Z^f},\pi_{Z^a})$, defined by
\begin{equation} \label{sec4:WD}
W(\pi_{Z^f},\pi_{Z^a})^2 = \mathbb{E}_{Z^{f:a}} [ \|z^f-z^a\|^2]
\end{equation}
is called the \emph{$L^2$-Wasserstein distance} between $\pi_{Z^f}$ and $\pi_{Z^a}$. 
\end{definition}

\begin{example} Let us consider the discrete set
\begin{equation} \label{sec4:td}
\mathbb{Z} = \{z_1,z_2,\ldots, z_M\}, \qquad z_i \in \mathbb{R},
\end{equation}
and two probability vectors $\mathbb{P}(z_i) = 1/M$ and
$\mathbb{P}(z_i) = w_i$, respectively, on $\mathbb{Z}$ with $w_i\ge 0$, $i=1,\ldots,M$, and 
$\sum_i w_i =1$. Any coupling between these two probability vectors is characterized
by a matrix $T \in \mathbb{R}^{M\times M}$ such that its entries $t_{ij} =
(T)_{ij}$ satisfy $t_{ij} \ge 0$ and
\begin{equation} \label{sec4:margLP}
\sum_{i=1}^M t_{ij} = 1/M, \qquad \sum_{j=1}^M t_{ij} = w_i .
\end{equation}
These matrices characterize the set of all couplings $\Pi$ in the
definition of the Monge-Kantorovitch problem. Given a coupling $T$ and the mean values 
\[
\bar z^f = \frac{1}{M}\sum_{i=1}^M z_i, \qquad \bar z^a = \sum_{i=1}^M w_i z_i,
\]
the covariance between the associated discrete random variables
${\rm Z}^f:\Omega \to \mathbb{Z}$ and ${\rm Z}^a:\Omega \to \mathbb{Z}$ is defined by
\begin{equation} \label{sec4:maxLP}
\mbox{cov}({\rm Z}^f,{\rm Z}^a) = \sum_{i,j=1}^M (z_i-\bar z^a) t_{ij} (z_j-\bar z^f).
\end{equation}
The particular coupling defined by $t_{ij} = w_i/M$ leads to zero
correlation between ${\rm Z}^f$ and ${\rm Z}^a$. On the other hand, maximizing the correlation leads
to a \emph{linear transport problem} in the $M^2$ unknowns
$\{t_{ij}\}$. More precisely, the unknowns $t_{ij}$ have to satisfy
the inequality constraints $t_{ij}\ge 0$, the equality constraints
(\ref{sec4:margLP}), 
and should minimize
\[
J(T) = \sum_{i,j=1}^M t_{ij} |z_i - z_j |^2,
\]
which is equivalent to maximizing (\ref{sec4:maxLP}). See
\cite{sr:strang} and \cite{sr:wright99} for an introduction to linear
transport problems and, more generally, to linear programming. 
\end{example}

%%%%%%%%%%%%%%%%%%%%%%%%%%%%%%%%%%%%%%%%%%%%%%%%%%%%%

We now return to continuous random variables and the desired coupling
between forecast and analysis PDFs. The following theorem is an adaptation of
a more general result on optimal couplings from \cite{sr:Villani}.

\begin{theorem} If the forecast PDF $\pi_{Z^f}$ has bounded
  second-order moments, then the optimal
  transference plan that solves the Monge-Kantorovitch problem
  gives rise to a deterministic coupling with transport map
  \[
  Z^a = \nabla_z\phi(Z^f),
  \]
  where $\phi:\mathbb{R}^{N_z} \to
  \mathbb{R}$ is a convex potential.
\end{theorem}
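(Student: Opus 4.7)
My plan is to prove this as a version of Brenier's theorem, following the line of reasoning in Villani. The key idea is to exploit the quadratic cost: expanding
\begin{equation*}
\|z^f-z^a\|^2 = \|z^f\|^2 + \|z^a\|^2 - 2\langle z^f,z^a\rangle,
\end{equation*}
and noting that under any coupling $\pi_{Z^{f:a}} \in \Pi(\pi_{Z^f},\pi_{Z^a})$ the first two terms have expectations depending only on the marginals (which are finite by the bounded second-moment assumption on $\pi_{Z^f}$, and analogously for $\pi_{Z^a}$), I can reformulate the Monge-Kantorovitch problem as the equivalent \emph{maximization}
\begin{equation*}
\sup_{\pi_{Z^{f:a}} \in \Pi(\pi_{Z^f},\pi_{Z^a})} \mathbb{E}_{Z^{f:a}}[\langle z^f,z^a\rangle].
\end{equation*}

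Next, I would invoke Kantorovich duality for this reformulated problem. The dual asserts that the supremum above equals
\begin{equation*}
\inf_{(\varphi,\psi)}\left\{ \int_{\cal Z} \varphi(z^f)\,\pi_{Z^f}(z^f)\,{\rm d}z^f + \int_{\cal Z} \psi(z^a)\,\pi_{Z^a}(z^a)\,{\rm d}z^a \right\},
\end{equation*}
where the infimum runs over pairs of integrable functions satisfying $\varphi(z^f) + \psi(z^a) \ge \langle z^f,z^a\rangle$ pointwise. By replacing $\varphi$ with its Legendre-Fenchel conjugate twice, I can restrict attention to pairs $(\varphi,\varphi^\ast)$ where $\varphi$ is lower semi-continuous and convex and $\varphi^\ast$ is its conjugate. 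Existence of an optimal dual pair requires a compactness/tightness argument on the marginals, which is standard under the second-moment hypothesis.

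The crucial structural step is then the support characterization: any optimal $\pi_{Z^{f:a}}^\ast$ must concentrate on the set where equality holds, i.e., $\varphi(z^f) + \varphi^\ast(z^a) = \langle z^f,z^a\rangle$. By the very definition of the Legendre transform this equality is exactly the condition $z^a \in \partial \varphi(z^f)$, the subdifferential of the convex function $\varphi$. Finally, I would invoke the fact that a convex function on $\mathbb{R}^{N_z}$ is differentiable Lebesgue-almost everywhere (Rademacher/Alexandrov), and since $\pi_{Z^f}$ is absolutely continuous (it is given by a PDF), it follows that $\partial\varphi(z^f) = \{\nabla_z \varphi(z^f)\}$ for $\pi_{Z^f}$-almost every $z^f$. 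Setting $\phi := \varphi$, this forces $\pi_{Z^{f:a}}^\ast$ to be the deterministic coupling $(z^f, \nabla_z \phi(z^f))$, giving $Z^a = \nabla_z \phi(Z^f)$ with $\phi$ convex.

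The main obstacles are the non-routine parts of Kantorovich duality (existence of the optimal dual pair and the upgrade from inequality to equality on the support of $\pi_{Z^{f:a}}^\ast$), and, secondarily, ensuring that subdifferentials collapse to gradients $\pi_{Z^f}$-almost everywhere. Both are classical in the optimal transport literature, so in a self-contained write-up I would either cite Villani directly for these ingredients or sketch the cyclical monotonicity argument (the support of an optimal plan is $c$-cyclically monotone, and cyclically monotone sets in $\mathbb{R}^{N_z}\times\mathbb{R}^{N_z}$ lie in the graph of the subdifferential of a convex function by Rockafellar's theorem) to arrive at the same conclusion.
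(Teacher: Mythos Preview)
Your proposal is correct and complete as a sketch of Brenier's theorem. However, your primary route differs from the paper's. The paper does not invoke Kantorovich duality or the Legendre--Fenchel transform at all; instead it argues directly via cyclical monotonicity: (i) any coupling whose support is not cyclically monotone can be improved, so an optimal plan $\pi^\ast_{Z^{f:a}}$ has cyclically monotone support; (ii) Rockafellar's theorem then places this support inside the subdifferential $\partial\phi$ of some convex $\phi$; (iii) the bounded-second-moment hypothesis ensures enough regularity for $\partial\phi$ to collapse to $\nabla_z\phi$. You do mention exactly this cyclical-monotonicity/Rockafellar alternative in your final paragraph, so you have in fact covered the paper's argument as well. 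The trade-off: your duality approach is more constructive (the convex potential appears explicitly as the optimal dual variable $\varphi$, and the support characterization $\varphi(z^f)+\varphi^\ast(z^a)=\langle z^f,z^a\rangle$ is immediate from complementary slackness), whereas the paper's cyclical-monotonicity route is more geometric and avoids the machinery of the dual problem, at the cost of making the existence and identification of $\phi$ less explicit (it comes from Rockafellar's theorem rather than from the optimization itself).
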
 

Below we sketch the basic line of arguments that lead to this
theorem. We first introduce
the \emph{support} of a coupling $\pi_{Z^{f:a}}$ on $\mathbb{R}^{N_z}
\times \mathbb{R}^{N_z}$ as the smallest closed set on which
$\pi_{Z^{f:a}}$ is concentrated,
\emph{i.e.},
\[
\mbox{supp} \,(\pi_{Z^{f:a}}) := \bigcap \{ S \subset
\mathbb{R}^{N_z}\times
\mathbb{R}^{N_z}: \,S\,
\,\mbox{closed and}\,\, \mu_{Z^{f:a}}(\mathbb{R}^{N_z} \times
\mathbb{R}^{N_z}
\setminus S) = 0 \}
\]
with the measure of $\mathbb{R}^{N_z}\times \mathbb{R}^{N_z} \setminus S$ defined by
\[
\mu_{Z^{f:a}}(\mathbb{R}^{N_z} \times
\mathbb{R}^{N_z}
\setminus S) = \int_{\mathbb{R}^{N_z} \times
\mathbb{R}^{N_z}
\setminus S}  \pi_{Z^{f:a}}(z^f,z^a) \,{\rm d}z^f {\rm d}z^a .
\]
The support of $\pi_{Z^{f:a}}$ is called \emph{cyclically monotone} if for 
every set of points $(z^f_i,z^a_i) \in \mbox{supp}\,(\pi_{Z^{f:a}})
\subset \mathbb{R}^{N_z} \times \mathbb{R}^{N_z}$,
$i=1,\ldots,I$, and any permutation $\sigma$ of $\{1,\ldots,I\}$ one
has
\begin{equation} \label{sec4:cyclical}
\sum_{i=1}^I \|z_i^f - z_i^a\|^2 \le \sum_{i=1}^I \|z_i^f -
z^a_{\sigma (i)}\|^2 .
\end{equation}
Note that (\ref{sec4:cyclical}) is equivalent to
\[
\sum_{i=1}^I (z_i^f)^T (z_{\sigma (i)}^a - z_i^a) \le 0.
\] 
It can be shown that any coupling whose support is not cyclically
monotone can be modified into another coupling with lower 
transport cost. Hence it follows that a solution $\pi^\ast_{Z^{f:a}}$
of the Monge-Kantorovitch problem (\ref{sec4:eq:MK}) has cyclically
monotone support. 

A fundamental theorem (\emph{Rockafellar's theorem}
\citep{sr:Villani}) of convex analysis now states
that cyclically monotone sets $S \subset \mathbb{R}^{N_z}\times \mathbb{R}^{N_z}$ 
are contained in the \emph{subdifferential} of a convex
function $\phi:\mathbb{R}^{N_z} \to \mathbb{R}$. Here the subdifferential
$\partial \phi$ of a convex function $\phi$ at a point $z\in \mathbb{R}^{N_z}$ is defined
as the compact, non-empty and convex set of all $m \in \mathbb{R}^{N_z}$ such that
\[
\phi(z') \ge \phi(z) + m (z'-z)
\]
for all $z' \in \mathbb{R}^{N_z}$. We write $m \in \partial
\phi(z)$. 
An optimal transport map is obtained whenever the convex
potential $\phi$ for a given optimal coupling $\pi^\ast_{Z^{f:a}}$ is sufficiently regular in
which case the subdifferential $\partial \phi (z)$ reduces to the classic
gradient $\nabla_z \phi$ and $z^a = \nabla_z \phi(z^f)$. This
regularity is ensured by the assumptions of the above theorem.
See \cite{sr:mccann95} and \cite{sr:Villani} for more details. 

We summarize the \emph{McKean optimal transportation approach} in the
following definition.

\begin{definition}
Given a dynamic iteration (\ref{sec2:DS}) with PDF $\pi_{Z^0}$ for the initial conditions and 
observations $y_{\rm obs}^n$, $n=1,\ldots,N$, the forecast $Z^{n,f}$ at
iteration index $n>0$ is defined by (\ref{sec4:McKean1}) and 
the analysis $Z^{n,a}$ by (\ref{sec4:McKean2b}). The convex potential $\phi^n$ is 
the solution to the Monge-Kantorovitch optimal transportation problem for coupling
$\pi_{Z^{f,n}}$ and $\pi_{Z^{a,n}}$. The iteration is started at $n=0$
with $Z^{0,a} = Z^0$.
\end{definition}

The application of optimal transportation to Bayesian inference and data
assimilation has first been discussed by \cite{sr:reich10},
\cite{sr:marzouk11}, and \cite{sr:cotterreich}.

In the following section we discuss data assimilation
algorithms from a McKean optimal transportation perspective. 

%%%%%%%%%%%%%%%%%%%%%%%%%%%%%%%%%%%%%%%%%%%%%%%%%%%%%%%%%%%%%%%%%%%%%%%%%%%%%%%
%
%
%
%%%%%%%%%%%%%%%%%%%%%%%%%%%%%%%%%%%%%%%%%%%%%%%%%%%%%%%%%%%%%%%%%%%%%%%%%%%%%

\section{Linear ensemble transform methods}
\label{sec:5}

In this section, we discuss SMCMs, EnKFs, and the recently proposed
\citep{sr:reich13} ETPF from a coupling perspective. All three data assimilation methods have in common that 
they are based on an ensemble $z_i^n$, $i=1,\ldots,$M, of model states. In the absence of
observations the $M$ ensemble members propagate independently under the model
dynamics (\ref{sec2:DS}), \emph{i.e.}, an analysis ensemble at time-level $n-1$ gives rise to a 
forecast ensemble at time-level $n$ via
\[
z_i^{n,f} = \Psi(z_i^{n-1,a}), \qquad i=1,\ldots,M.
\] 
The three methods differ in the way the forecast ensemble $\{z_i^{n,f}\}_{i=1}^M$ 
is transformed into an analysis ensemble $\{z_i^{n,a}\}_{i=1}^M$ 
under an observation $y_{\rm obs}^n$. However, all three methods share a common
mathematical structure which we outline next. We drop the iteration
index in order to simplify the notation. 

\begin{definition}
The class of \emph{linear ensemble transform filters} (LETFs) is defined by
\begin{equation} \label{sec5:transform}
z^{a}_j = \sum_{i=1}^M z_i^{f} s_{ij},
\end{equation}
where the coefficients $s_{ij}$ are the $M^2$ entries of a matrix 
$S \in \mathbb{R}^{M\times M}$. 
\end{definition}

The concept of LETFs is well established for EnKF formulations \citep{sr:tippett03}. 
It will be shown below that SMCMs and the ETPF also belong to the class of LETFs.  
In other words, these three methods differ only in the definition of the corresponding transform matrix $S$. 

%%%%%%%%%%%%%%%%%%%%%%%%%%%%%%%%%%%%%%%%%%%%%%%%%%

\subsection{Sequential Monte Carlo methods (SMCMs)} \label{sec51}

A central building block of a SMCM is the
\emph{proposal density} $\pi_{\rm prop}(z|z')$, which produces a \emph{proposal
ensemble} $\{z^p_i\}_{i=1}^M$ from the last analysis ensemble. Here
we assume, for simplicity, that the proposal density is given by the model dynamics
itself, \emph{i.e.},
\[
\pi_{\rm prop}(z|z') = \delta (z-\Psi(z')),
\]
and then
\[
z_i^p = z_i^f ,\qquad i=1,\ldots,M.
\]
One associates with the proposal/forecast ensemble two
discrete measures on 
\begin{equation} \label{sec5:SMCM1a}
\mathbb{Z} =
\{z_1^{f},z_2^{f},\ldots,z_M^{f} \}, 
\end{equation}
namely the uniform measure $\mathbb{P}(z_i^{f}) = 1/M$ and the non-uniform measure
\[
\mathbb{P}(z_i^{f}) = w_i, 
\]
defined by the importance weights
\begin{equation} \label{sec5:SMCM1b}
w_i = \frac{\pi_Y(y_{\rm
    obs}|z_i^{f})} {\sum_{j=1}^M \pi_Y(y_{\rm obs}| z_j^{f})}.
\end{equation}
The \emph{sequential importance resampling} (SIR) filter \citep{sr:gordon93}
resamples from the weighted forecast ensemble in order to produce a new equally weighted 
analysis ensemble $\{z_i^a\}$. Here we only consider SIR filter implementations
with resampling performed after each data assimilation cycle.

An in-depth discussion of the SIR filter and more general SMCMs 
can be found, for example, in \cite{sr:Doucet,sr:doucet11}. Here we focus
on the coupling of discrete measures perspective of a resampling step.
We first note that any resampling strategy effectively leads to a coupling of the 
uniform and the non-uniform measure on (\ref{sec5:SMCM1a}). 
As previously discussed, a coupling is defined by a matrix $T \in
\mathbb{R}^{M\times M}$ such that $t_{ij} \ge 0$, and
\begin{equation} \label{sec5:SMCM1c}
\sum_{i=1}^M t_{ij} = 1/M, \qquad \sum_{j=1}^M t_{ij} = w_i.
\end{equation}
The resampling strategy (\ref{sec4:DelMoral}) leads to
\[
t_{ij} = \frac{1}{M}\left(\epsilon w_j \delta_{ij} + (1- \epsilon
  w_j)w_ i \right)
\]
with $\epsilon \ge 0$ chosen such that $\epsilon w_j \le 1$ for all $j=1,\ldots,M$.
Monomial resampling corresponds to the special case $\epsilon = 0$, \emph{i.e.}~$t_{ij} = w_i/M$.
The associated transformation matrix $S$ in (\ref{sec5:transform}) is the realization of a random matrix
with entries $s_{ij} \in \{0,1\}$ such that each column of $S$
contains exactly one entry $s_{ij} = 1$. Given a coupling $T$, the
probability for the $s_{ij}$'s entry to be the one selected in the
$j$th column is
\[
\mathbb{P}(s_{ij} = 1) = M t_{ij} 
\]
and $Mt_{ij} = w_i$ in case of monomial resampling. Any such resampling procedure 
based on a coupling matrix $T$ leads to a consistent coupling for the underlying
forecast and analysis probability measures as $M\to \infty$, which, however, is non-optimal
in the sense of the Monge-Kantorovitch problem (\ref{sec4:eq:MK}). 
We refer to \cite{sr:crisan} for resampling strategies which satisfy alternative optimality conditions.
  
We emphasize that the transformation matrix $S$ of a SIR particle filter analysis step 
satisfies 
\begin{equation} \label{sec5:LETF1a}
\sum_{i=1}^M s_{ij} = 1
\end{equation}
and
\begin{equation} \label{sec5:LETF1b}
s_{ij}\in [0,1].
\end{equation}
In other words, each realization of the resampling step yields a Markov
chain $S$. Furthermore, the weights $\hat w_i = M^{-1} \sum_{j=1}^M
s_{ij}$ satisfy $\mathbb{E}[\hat w_i] = w_i$ and the analysis ensemble
defined by 
$z_j^a = z_i^f$ if $s_{ij} = 1$, $j=1,\ldots,M$, is contained in the convex hull
of the forecast ensemble (\ref{sec5:SMCM1a}).

A forecast ensemble $\{z_i^f\}_{i=1}^M$ leads to the following
estimator 
\[
\bar z^f = \frac{1}{M} \sum_{i=1}^M z_i^f
\]
for the mean and
\[
P_{zz}^f = \frac{1}{M-1} \sum_{i=1}^M (z_i^f-\bar z^f)(z_i^f -\bar
z^f)^T
\]
for the covariance matrix. In order to increase the \emph{robustness}
of a SIR particle filter one often augments the resampling
step by the \emph{particle rejuvenation step} \citep{sr:pham01}
\begin{equation} \label{sec5:rejuvenation1}
z_j^a = z_i^f + \xi_j, 
\end{equation}
where the $\xi_j$'s are realizations of $M$ independent and
identically distributed Gaussian random variables ${\rm
  N}(0,h^2P_{zz}^f)$ and $s_{ij} = 1$. Here $h>0$ is the \emph{rejuvenation parameter} which
determines the magnitude of the stochastic perturbations. Rejuvenation
helps to avoid the creation of identical analysis ensemble members
which would remain identical under the deterministic model dynamics
(\ref{sec2:DS}) for all times. Furthermore, rejuvenation can be used
as a heuristic tool in order to compensate for model errors as encoded, for example,
in the difference between (\ref{sec2:DS}) and (\ref{sec2:DST}).

In this paper we only discuss SMCMs which are based on the proposal
step (\ref{sec1:forecast}). Alternative proposal steps are possible
and recent work on alternative implementations of SMCMs include
\cite{sr:leeuwen10}, \cite{sr:chorin10}, \cite{sr:chorin12},
\cite{sr:chorin12b}, \cite{sr:leeuwen12}, \cite{sr:reich13b}.

%%%%%%%%%%%%%%%%%%%%%%%%%%%%%%%%%%%%%

\subsection{Ensemble Kalman filter (EnKF)}

The historically first version of the EnKF uses perturbed observations
in order to transform a forecast ensemble into an analysis ensemble. The key requirement of any
EnKF is that the transformation step is consistent with the classic Kalman update step in
case the forecast and analysis PDFs are Gaussian. The, so called, \emph{EnKF with perturbed
observations} is explicitly given by the simply formula
\[
z_j^a = z_j^f - K (y_j^f + \xi_j - y_{\rm obs}), \qquad j=1,\ldots,M,
\]
where $y_j^f = h(z_j^f)$, the $\xi_j$'s are realizations of independent and identically distributed
Gaussian random variables with mean zero and covariance matrix $R$, and $K$ denotes 
the Kalman gain matrix, which in case of the EnKF is determined by the forecast ensemble as follows:
\[
K = P_{zy}^f (P_{yy}^f + R)^{-1}
\]
with empirical covariance matrices
\[
P^f_{zy} = \frac{1}{M-1} \sum_{i=1}^M (z_i^f - \bar z^f)(y_i^f - \bar y^f)^T
\]
and
\[
P^f_{yy} = \frac{1}{M-1} \sum_{i=1}^M (y_i^f - \bar y^f)(y_i^f - \bar y^f)^T,
\]
respectively.  Here the ensemble mean in observation space is defined
by
\[
\bar y^f = \frac{1}{M} \sum_{i=1}^M y_i^f.
\]

In order to shorten subsequent notations, we introduce the 
$N_y \times M$ matrix of ensemble deviations 
\[
A_y^f = [ y_1^f-\bar y^f, y_2^f -\bar y^f, \dots, y_M^f-\bar y^f] 
\]
in observation space and the $N_z \times M$ matrix of ensemble deviations
\[
A_z^f = [ z_1^f-\bar z^f, z_2^f -\bar z^f, \dots, z_M^f-\bar z^f] 
\]
in state space, respectively. 
In terms of these ensemble deviation matrices, it holds
that 
\[
P_{zz}^f = \frac{1}{M-1} A_z^f (A_z^f)^T \quad \mbox{and} \quad
P_{zy}^f = \frac{1}{M-1} A_z^f (A_y^f)^T,
\]
respectively.

It can be verified by explicit calculations that the EnKF 
with perturbed observations fits into the class of LETFs with
\[
z_j^a = \sum_{i=1}^M z_i^f \left( \delta_{ij}  - \frac{1}{M-1} (y_i^f -\bar y_i^f)^T
(P_{yy}^f + R)^{-1} ( y_j^f + \xi_j - y_{\rm obs}) \right)
\]
and, therefore,
\[
s_{ij} = \delta_{ij} - \frac{1}{M-1}(y_i^f - \bar y^f)^T (P_{yy}^f + R)^{-1} 
(y_j^f + \xi_j - y_{\rm obs}).
\]
Here we have used that
\[
\frac{1}{M-1} \sum_{i=1}^M (z_i^f - \bar z^f)(y_i^f - \bar y^f)^T = 
\frac{1}{M-1} \sum_{i=1}^M z_i^f (y_i^f - \bar y^f)^T.
\]
We note that the transform matrix $S$ is the realization of a random matrix. The class of 
\emph{ensemble square root filters} (ESRF) leads instead to
deterministic transformation matrices $S$.
More precisely, an ESRF uses separate transformation steps for the ensemble mean $\bar z^f$ and
the ensemble deviations $z_i^f - \bar z^f$. The mean is simply updated according to
the classic Kalman formula, \emph{i.e.}
\begin{equation} \label{sec5:Kalmanmean}
\bar z^a = \bar z^f - K(\bar y^f - y_{\rm obs})
\end{equation}
with the Kalman gain matrix defined as before. 

Upon introducing the analysis matrix of ensemble deviations $A_z^a \in
\mathbb{R}^{N_z\times M}$, one obtains
\begin{align} \nonumber 
P_{zz}^a &= \frac{1}{M-1} A_z^a (A_z^a)^T \\
&= P_{zz}^f - K (P_{zy}^f)^T = \frac{1}{M-1} A_z^f Q (A_z^f)^T \label{sec5:Kalmancovariance}
\end{align}
with the $M\times M$ matrix $Q$ defined by
\[
Q = I - \frac{1}{M-1} (A^f_y)^T (P_{yy}^f + R)^{-1} A_y^f .
\]
Let us denote the matrix square root\footnote{The matrix square root
  of a symmetric positive semi-definite matrix $Q$ is the unique
  symmetric matrix $D$ which satisfies $D D = Q$.} of $Q$ by $D$ and its entries by
$d_{ij}$. 

We note that $\sum_{i=1}^M d_{ij} = 1$ and it follows that
\begin{align} \nonumber
z_j^a &= \bar z^f - K(\bar y^f - y_{\rm obs}) + \sum_{i=1}^M (z_i^f
-\bar z^f) d_{ij} \\ \nonumber
&= \sum_{i=1}^M z_i^f  \left(
\frac{1}{M-1}(y_i^f -\bar y^f)^T(P_{yy}^f + R)^{-1} (y_{\rm obs} - \bar y^f)  + d_{ij} \right)\\
&= \sum_{i=1}^M z_i^f \left( \frac{1}{M-1}(y_i^f -\bar y^f)^T(P_{yy}^f + R)^{-1} 
(y_{\rm obs} - \bar y^f ) + d_{ij}  \right) . \label{sec5:ESRF}
\end{align}
The appropriate entries for the transformation matrix $S$ of an ESRF
can now be read off of (\ref{sec5:ESRF}). See 
\cite{sr:tippett03,sr:wang04,sr:nichols08,sr:ott04,sr:nerger12} and
\cite{sr:evensen} for further details on other ESRF implementations such as the ensemble
adjustment Kalman filter. We mention in particular that an application of the 
Sherman-Morrison-Woodbury formula
\citep{sr:golub} leads to the equivalent square root formula
\begin{align} 
D &= \left\{ I +   \frac{1}{M-1} (A_y^f)^T R^{-1}  A_y^f  \right\}^{-1/2}, \label{sec5:eq:Stransform}
\end{align}
which avoids the need for inverting the $N_y \times N_y$ matrix
$P_{yy}^f + R$, which is desirable whenever $N_y \gg M$. 
Furthermore, using the equivalent Kalman gain matrix representation
\[
K = P^a_{zy}R^{-1},
\]
the Kalman update formula
(\ref{sec5:Kalmanmean}) for the mean becomes
\begin{align*}
\bar z^a &= \bar z^f - P^a_{zy} R^{-1} (\bar y^f - y_{\rm obs})\\
&= \bar z^f + \frac{1}{M-1} A_z^f Q (A^f_y)^T R^{-1} (y_{\rm obs} - \bar y^f).
\end{align*}
This reformulation gives rise to 
\begin{equation} \label{sec5:letkf}
s_{ij} = \frac{1}{M-1} q_{ij} (y^f_j - \bar y^f) R^{-1}
(y_{\rm obs} - \bar y^f) + d_{ij} ,
\end{equation}
which forms the basis of the \emph{local ensemble transform Kalman filter}
(LETKF) \citep{sr:ott04,sr:hunt07} to be discussed in more detail in
Section \ref{sec54}.

We mention that the EnKF with perturbed
observations or an ESRF implementation leads to transformation
matrices $S$ which satisfy (\ref{sec5:LETF1a}) but the entries
$s_{ij}$ can take positive as well as negative values. This can be
problematic in case the state variable $z$ should be
non-negative. Then it is possible that a forecast ensemble $z_i^f \ge
0$, $i=1,\ldots,M$, is transformed into an analysis $z_i^a$, which
contains negative entries. See \cite{sr:janjic13} for modifications to 
EnKF type algorithms in order to preserve positivity. 

One can discuss the various EnKF formulations from an optimal
transportation perspective. Here the coupling is between two Gaussian
distributions; the forecast PDF ${\rm N}(\bar z^f,P_{zz}^f)$
and analysis PDF ${\rm N}(\bar z^a,P_{zz}^a)$, respectively with the
analysis mean given by (\ref{sec5:Kalmanmean}) and the analysis
covariance matrix by (\ref{sec5:Kalmancovariance}).
We know that the optimal coupling must be of the form
\[
z^a = \nabla_z \phi (z^f)
\]
and, in case of Gaussian PDFs, the convex potential
$\phi:\mathbb{R}^{N_z} \to \mathbb{R}$ is
furthermore bilinear, \emph{i.e.},
\[
\phi(z) = b^T z + \frac{1}{2} z^T A z
\]
with the vector $b$ and the matrix $A$ appropriately defined. The choice
\[
z^a = b + Az^f = \bar z^a + A(z^f-\bar z^f)
\]
leads to 
\[
b = \bar z^a - A\bar z^f
\]
for the vector $b \in \mathbb{R}^{N_z}$. The matrix $A \in
\mathbb{R}^{N_z \times N_z}$ then needs to satisfy
\[
P_{zz}^a = AP_{zz}^f A^T.
\]
The optimal, in the sense of Monge-Kantorovitch with cost function
$c(z^f,z^a) = \|z^f-z^a\|^2$,  matrix $A$ is given by
\[
A = (P_{zz}^a)^{1/2} \left[ (P_{zz}^a)^{1/2} P_{zz}^f (P_{zz}^a)^{1/2}
\right]^{-1/2} (P_{zz}^a)^{1/2} .
\]
See \cite{sr:olkin82}. An efficient implementation of this optimal
coupling in the context of ESRFs has been discussed in
\cite{sr:cotterreich}. The essential idea is to replace the matrix
square root of $P_{zz}^a$ by the analysis matrix of
ensemble deviations  $A_z^a = A_z^f D$ scaled by $1/\sqrt{M-1}$.

Note that different cost functions $c(z^f,z^a)$ lead to different
solutions to the associated Monge-Kantorovitch problem
(\ref{sec4:eq:MK}). Of particular interest is the weighted inner
product
\[
c(z^f,z^a) = \left( (z^f-z^a)^T B^{-1} (z^f - z^a)\right)^2
\]
for an appropriate positive definite matrix $B \in \mathbb{R}^{N_z
  \times N_z}$ \citep{sr:cotterreich}. 

As for SMCMs particle rejuvenation can be
applied to the analysis from an EnKF or ESRF. However, the more
popular method for increasing the robustness of EnKFs is to apply 
\emph{multiplicative ensemble inflation} 
\begin{equation} \label{sec5:inflation}
z_i^f \to \bar z^f + \alpha (z_i^f - \bar z^f), \qquad \alpha\ge 1,
\end{equation}
to the forecast ensemble prior to the application of an EnKF or ESRF.
The parameter $\alpha$ is called the inflation factor. An adaptive
strategy for determining the factor $\alpha$ has, for example, been
proposed by \cite{sr:anderson07,sr:miyoshi11}. The inflation factor
$\alpha$ can formally be related to the rejuvenation parameter $h$ 
in (\ref{sec5:rejuvenation1})
through
\[
\alpha = \sqrt{1+h^2}.
\]
This relation becomes exact as $M\to \infty$. 

We mention that the \emph{rank histogram filter} of
\cite{sr:anderson10}, which uses a nonlinear filter in observation space and linear regression
from observation onto state space, also fits into the framework of the LETFs. See
\cite{sr:reichcotter14} for more details. The \emph{nonlinear ensemble adjustment filter} 
of \cite{sr:lei11}, on the other hand, falls outside the class of LETFs.

%%%%%%%%%%%%%%%%%%%%%%%%%%%%%%%%%%%%%%

\subsection{Ensemble transform particle filter (ETPF)} \label{secETPF}

We now return to the SIR filter described in
Section \ref{sec51}. Recall that a SIR filter relies on
importance resampling which we have interpreted as a coupling
between the uniform measure on (\ref{sec5:SMCM1a}) and the measure
defined by (\ref{sec5:SMCM1b}). Any coupling is characterized by a
matrix $T$ such that its entries are non-negative and
(\ref{sec5:SMCM1c}) hold. 

\begin{definition} \label{sec5:ETPF}
The ETPF is based on choosing the $T$
which minimizes
\begin{equation} \label{sec5:functional1}
J(T) = \sum_{i,j=1}^M t_{ij} \|z_i^f  - z_j^f\|^2
\end{equation}
subject to (\ref{sec5:SMCM1c}) and $t_{ij}\ge 0$.
Let us denote the minimizer by $T^\ast$. Then the transform matrix $S$
of an ETPF is defined by
\[
S = MT^\ast,
\]
which satisfies (\ref{sec5:LETF1a}) and (\ref{sec5:LETF1b}).
\end{definition}

Let us give a geometric interpretation of the ETPF transformation step. 
Since $T^\ast$ from Definition \ref{sec5:ETPF} provides an optimal coupling, 
Rockafellar's theorem implies the existence of a convex potential
$\phi_M :\mathbb{R}^{N_z} \to
\mathbb{R}$ such that 
\[
z_i^f \in \partial \phi_M (z_j^f) \quad \mbox{for all}\quad i \in
{\cal I}_j :=  \{i' \in \{1,\ldots,M\}: t^\ast_{i'j} > 0\},
\]
$j=1,\ldots,M$. In fact, $\phi_M$ can be chosen to be piecewise affine and a constructive 
formula can be found in \cite{sr:Villani}. The ETPF transformation step
\begin{equation} \label{sec5:etpftransform}
z_j^a = M \sum_{i=1}^M z_i^f t^\ast_{ij} = \sum_{i=1}^M z_i^f s_{ij}
\end{equation}
corresponds to a particular selection from the linear space 
$\partial \phi_M (z_j^f)$, $j=1,\ldots,M$; namely the 
expectation value of the discrete random variable 
\[
{\rm Z}_j^a: \Omega \to \{z_1^f,z_2^f,\ldots,z_M^f\}
\]
with probabilities $\mathbb{P}(z_i^f) = s_{ij}$, $i=1,\ldots,M$. 
Hence it holds that
\[
\bar z^a := \frac{1}{M}\sum_{j=1}^M z_j^a = \sum_{i=1}^M w_i z_i^f .
\]
See \cite{sr:reich13} for more details. where 
it has also been shown that the potentials $\phi_M$ converge to the solution of the
underlying continuous Monge-Kantorovitch problem as the ensemble size
$M$ approaches infinity. 

%\begin{remark} 
%Note that (\ref{sec5:functional1}) can equivalently be written as
%\[
%J(T) = \frac{1}{M} \sum_{j=1}^1 \|z_j^f\|^2 + \sum_{i=1}^M w_i
%\|z_i^f\|^2 - 2 \sum_{i,j=1}^M t_{ij} (z_i^f)^Tz_j^f .
%\]
%\cite{sr:cuturi13b} have suggested to replace
%the cost function (\ref{sec5:functional1}) by 
%\[
%\tilde J(T) = -2\sum_{i,j=1}^M t_{ij} (z_i^f)^T (z_j^a) = -\frac{2}{M}\sum_{j=1}^M \|z_j^a\|^2, \qquad z_j^a %=
%M \sum_{i=1}^M z_i^f t_{ij} ,
%\]
%and one effectively seeks a coupling which maximizes the mean squared norm of
%the analysis ensemble deviations since minimizing $\tilde J(T)$ is
%equivalent to maximizing
%\[
%\hat J(T) = \frac{1}{M} \sum_{i=1}^M \|z_i^a - \bar z^a\|^2.
%\]
%In addition, \cite{sr:cuturi13b} also utilize a
%fast approximation method for solving linear transport problems using
%the \emph{Sinkhorn distance} \citep{sr:cuturi13a}.
%\end{remark}

It should be noted that standard algorithms for finding the minimizer of (\ref{sec5:functional1}) 
suffer from a ${\cal O}(M^3 \log M)$ computational complexity. This complexity has been
reduced to ${\cal O}(M^2 \log M)$ by \cite{sr:Pele-iccv2009}. There are also
fast iterative methods for finding approximate minimizers of (\ref{sec5:functional1}) using
the \emph{Sinkhorn distance} \citep{sr:cuturi13a}.

The particle rejuvenation step (\ref{sec5:rejuvenation1}) for SMCMs can be
extended to the ETPF as follows:
\begin{equation} \label{sec5:rejuvenation2}
z_j^a = \sum_{i=1}^M z_i^f s_{ij} + \xi_j, \qquad j=1,\ldots,M.
\end{equation}
As before the $\xi_j$'s are realizations of $M$ independent Gaussian
random variables with mean zero and appropriate covariance matrices $P_j^a$. We use 
$P_j^a = h^2 P_{zz}^f$ with rejuvenation parameter $h>0$ for the 
numerical experiments conducted in this paper. Another possibility
would be to locally estimate $P_j^a$ from the coupling matrix
$T^\ast$, \emph{i.e.},
\[
P_j^a = \sum_{i=1}^M s_{ij} (z_i^f - \bar z_j^a)(z_i^f - \bar z_j^a)^T
\]
with mean $\bar z_j^a = \sum_{i=1}^M s_{ij} z_i^f$.

%%%%%%%%%%%%%%%%%%%%%%%%%%%%%%%%%%%%%%%%%%%%%%%%%%%%%%%%%%%%%%%%%

\subsection{Quasi-Monte Carlo (QMC) convergence} \label{sec55}
 
The expected rate of convergence for standard Monte Carlo methods
is $M^{-1/2}$ where $M$ denotes the ensemble size. 
QMC methods have an upper bound of $\log(M)^dM^{-1}$ where $d$
stands for the dimension \citep{sr:caflisch88}. For the purpose of this paper, $d=N_z$. 
Unlike Monte Carlo methods QMC methods also depend on the dimension of the space which 
implies a better performance for small $N_z$ or/and large $M$. However, in practice QMC
methods perform considerably better than the theoretical bound for the
convergence rate and outperform Monte Carlo methods even for small ensemble 
sizes and in very high dimensional models. The latter may be explained by the concept of
\textit{effective dimension} introduced by \cite{sr:caflisch97}.

The following simulation investigates the convergence rate of the
estimators for the first and second moment of the posterior
distribution after applying a single analysis step of a SIR particle filter
and an ETPF. The prior is chosen to be a uniform
distribution on the unit square and the sum of
both components is observed with additive noise drawn from a centered
Gaussian distribution with variance equal to two.
Reference values for the posterior moments are generated using Monte Carlo
importance sampling with sample size $M=2^{26}$. QMC samples of different sizes 
are drawn from the prior distribution and a single residual resampling step is
compared to a single transformation step using an optimal coupling $T^\ast$. 
Fig.~\ref{fig_QMC} shows the root mean square errors (RMSEs) of the different posterior
estimates with respect to their reference values. We find that the transform method
preserves the optimal $M^{-1}$ convergence rate of the prior QMC samples while
resampling reduces the convergence rate to the $M^{-1/2}$.

\begin{figure}[h]
    \centering
  \subfigure{\includegraphics[clip=TRUE,scale=.3,trim =.5cm 0cm .5cm 0cm]{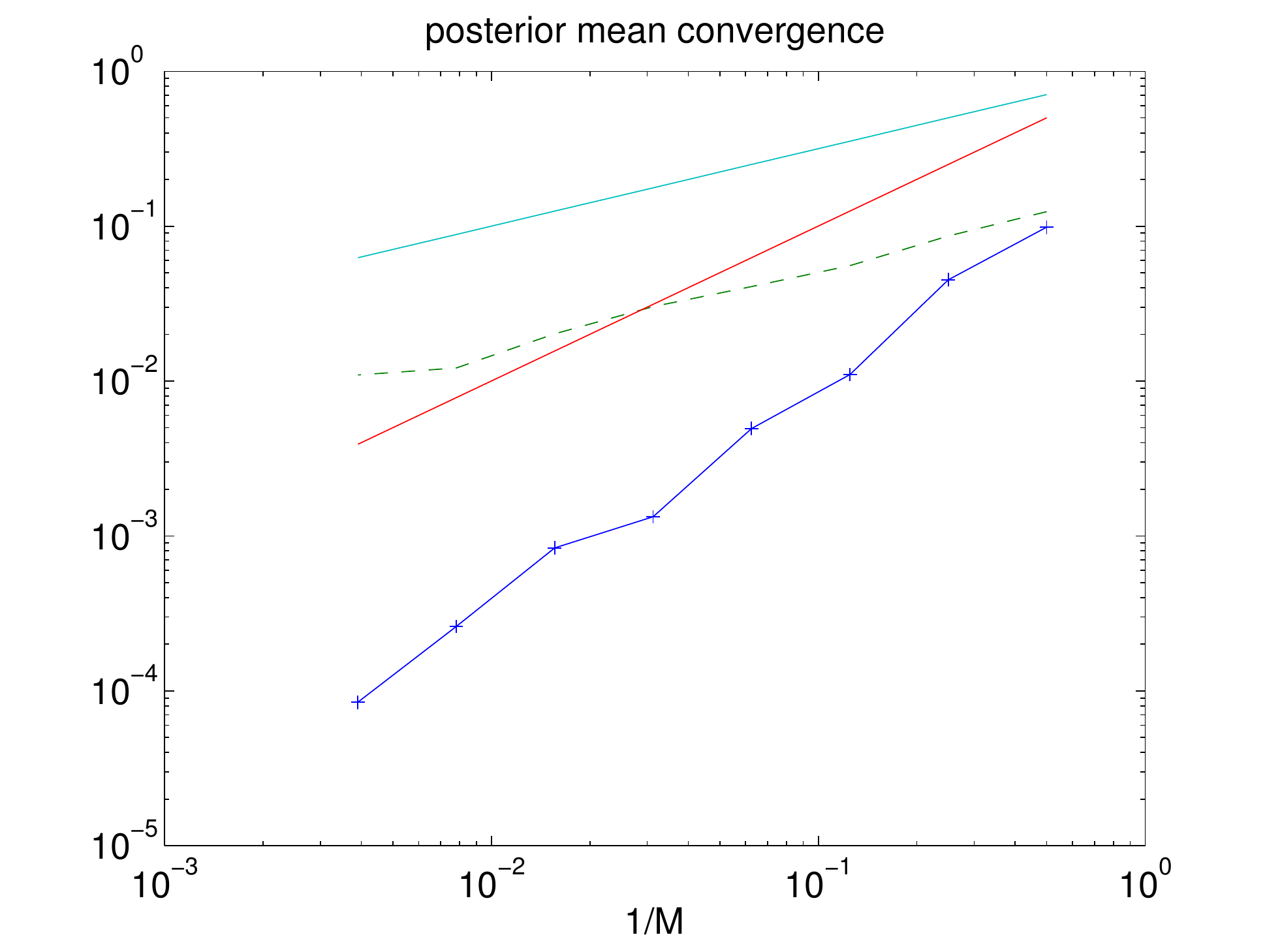}}
  \hfill
  \subfigure{\includegraphics[clip=TRUE,scale=.3,trim =.5cm 0cm .5cm 0cm]{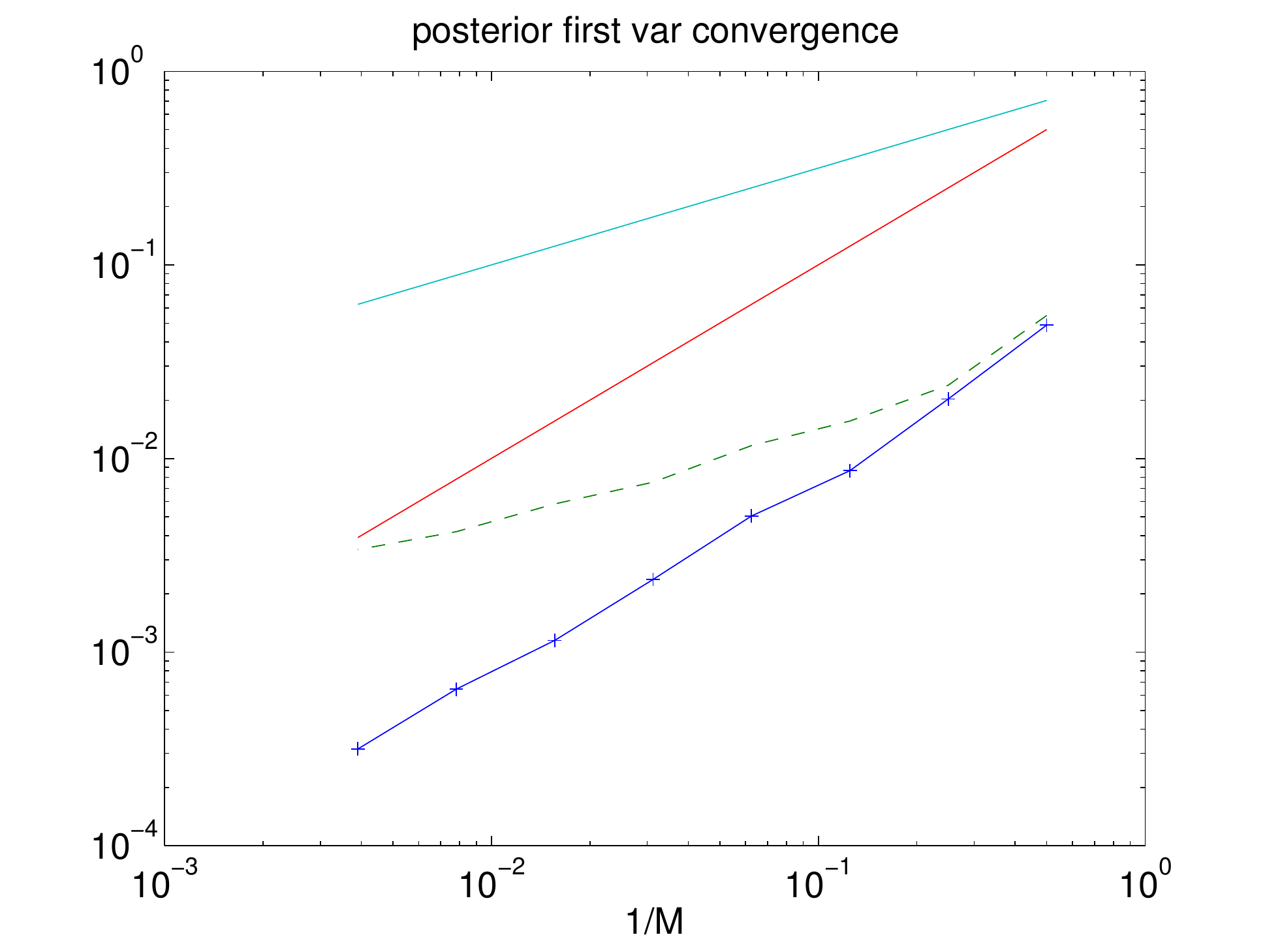}}
  \hfill
  \subfigure{\includegraphics[clip=TRUE,scale=.3,trim =0.5cm 0cm .5cm
    0cm]{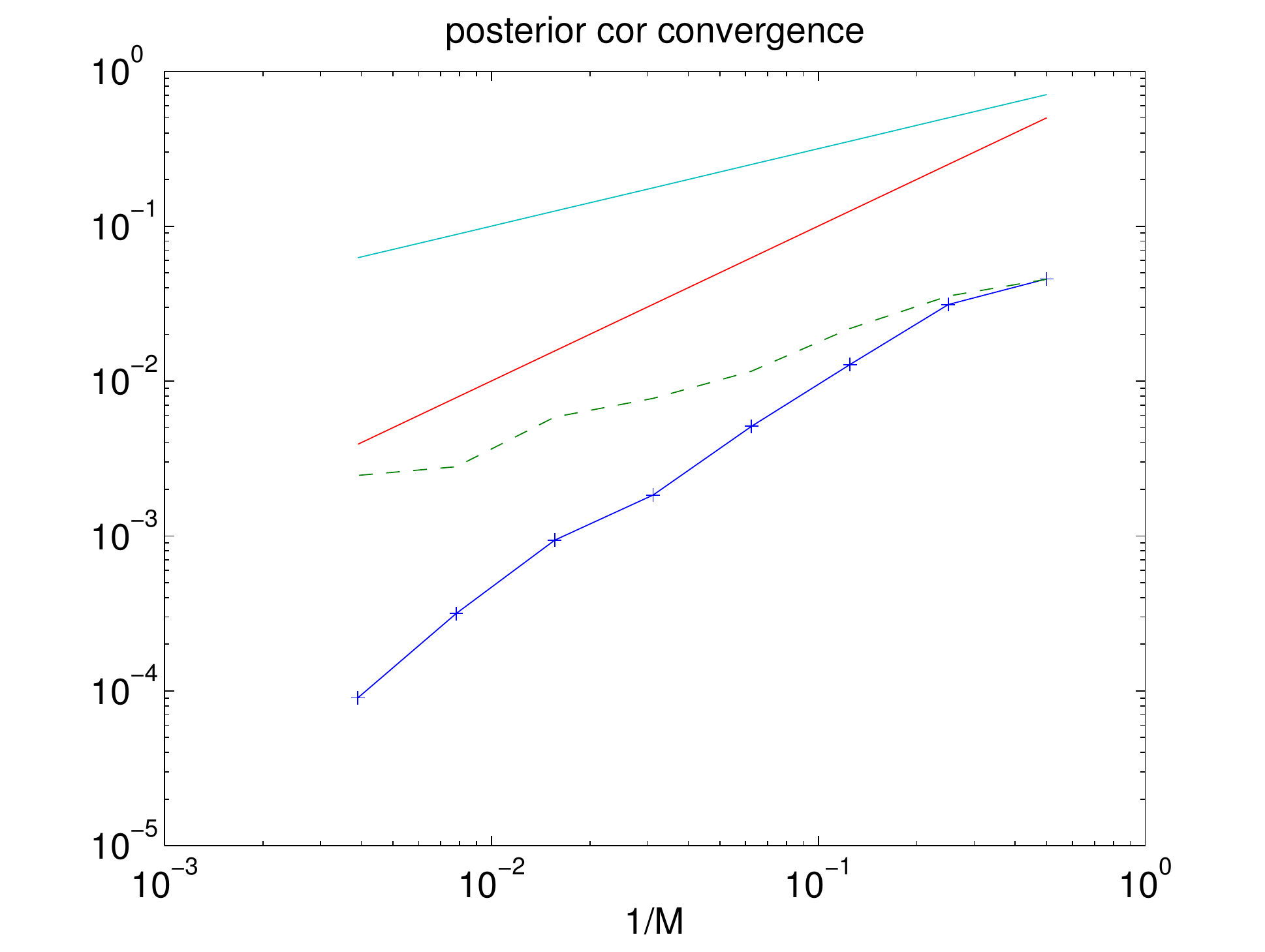}}
  \hfill
  \subfigure{\includegraphics[clip=TRUE,scale=.3,trim =0.5cm 0cm .5cm
    0cm]{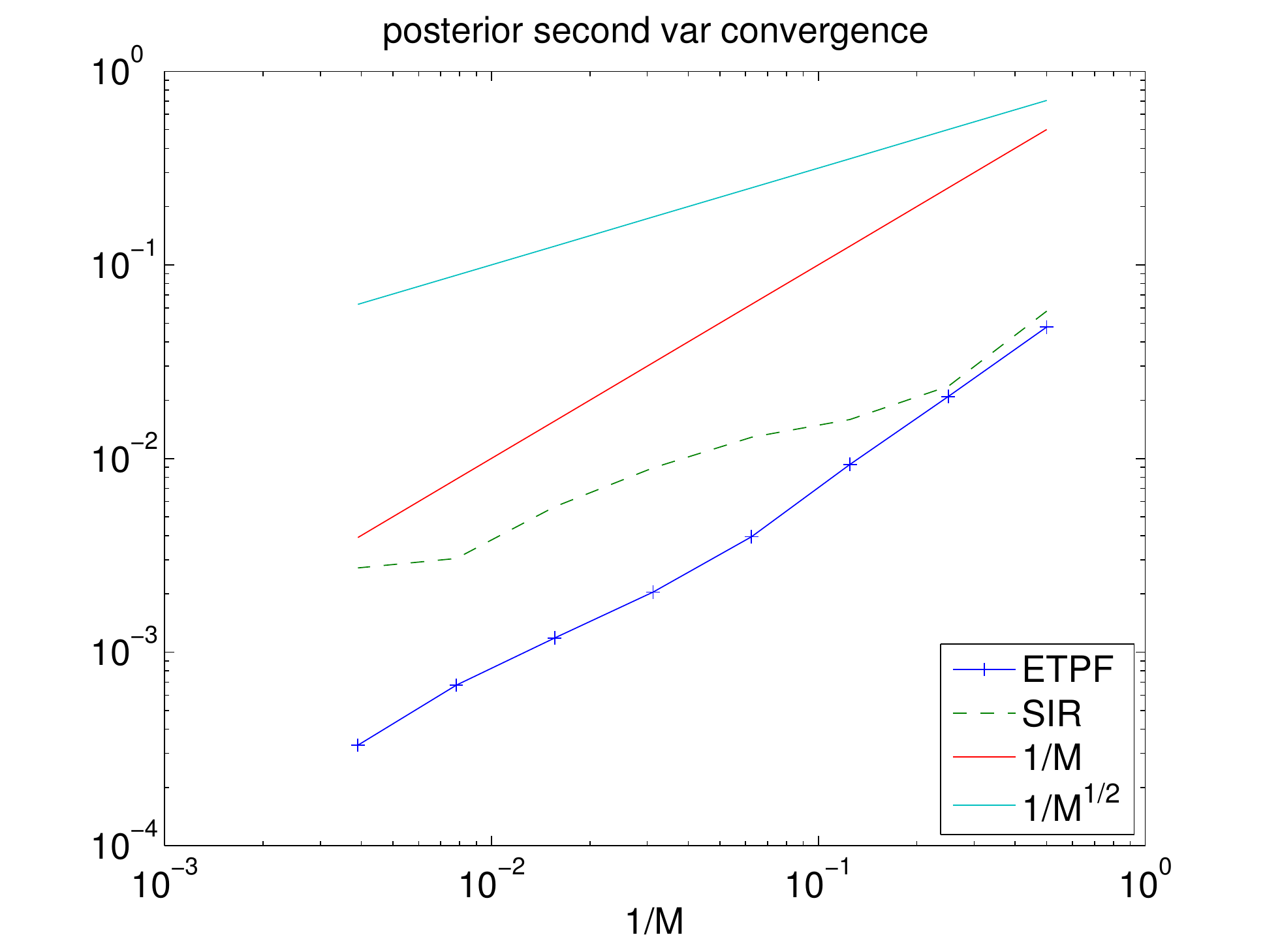}}
  \caption{RMSEs of estimates for the posterior mean, variances (var), and correlation (cor)
    using importance resampling (SIR) and optimal transformations (ETPF)
    plotted on a log-log scale as a function of ensemble sizes $M$.}
  \label{fig_QMC}
\end{figure}

We mention that replacing the deterministic transformation step in
(\ref{sec5:etpftransform}) by drawing ensemble member $j$ from the
prior ensemble according to the weights given by the $j$-th column of
$S$ leads to a stochastic version of the ETPF. This variant, despite
being stochastic like the importance resampling step, results again in a QMC
convergence rate. 

%%%%%%%%%%%%%%%%%%%%%%%%%%%%%%%%%%%%%%%%%%%%%%%%%%%%%%%%%%
%
%
%
%%%%%%%%%%%%%%%%%%%%%%%%%%%%%%%%%%%%%%%%%%%%%%%%%%%%%%%

\section{Spatially extended dynamical systems and localization} \label{sec54}

Let us start this section with a simple thought experiment on the curse of dimensionality.
Consider a state space of dimension $N_z = 100$ and a prior Gaussian distribution
with mean zero and covariance matrix $P^f = I$. The reference solution is $z_{\rm ref} = 0$
and we observe every component of the state vector subject to independent measurement
errors with mean zero and variance $R = 0.16$. If one applies a single importance resampling
step to this problem with ensemble size $M=10$, one finds that the effective sample size
collapses to $M_{\rm eff} \approx 1$ and the resulting analysis ensemble is unable to recover 
the reference solution. However, one also quickly realizes that the stated problem 
can be decomposed into $N_z$ independent data assimilation problems in each component
of the state vector alone. If importance resampling is now performed in each component of the
state vector independently, then the effective sample size for each of the $N_z$ analysis problems
remains close to $M =10$ and the reference solution can be recovered from the
given set of observations. This is the idea of \emph{localization}. Note that localization 
has increased the total sample size to $M \times N_z = 1000$ for this problem!

We now formally extend LETFs to spatially extended systems which may
be viewed as an infinite-dimensional dynamical system
\citep{sr:robinson} and formulate an appropriate localization strategy. 
Consider the linear advection equation
\[
u_t + u_x = 0
\]
as a simple example for such a scenario. If $u_0(x)$ denotes the solution
at time $t=0$, then 
\[
u(x,t) = u_0(x+t)
\]
is the solution of the linear advection equation for all $t\ge
0$. Given a time-increment $\Delta t>0$, the associated dynamical
system maps a function $u(x)$ into $u(x+\Delta t)$. A
finite-dimensional dynamical system is obtained by discretizing in
space with mesh-size $\Delta x>0$. For example, the Box scheme
\citep{sr:Morton} leads to
\[
\frac{u_j^{k+1}+u_{j+1}^{k+1} - u_j^k-u_{j+1}^k}{2\Delta t} +
\frac{u_{j+1}^{k+1} + u_{j+1}^k - u_j^{k+1} - u_j^k}{2\Delta x} = 0
\]
and, for $J$ spatial grid points, the state vector at $t_k = k\Delta t$ becomes
\[
z^k = (u_1^k,u_2^k,\ldots,u_J^k)^T \in \mathbb{R}^J.
\]
We may take the formal limit $J\to \infty$ and $\Delta x\to 0$ in
order to return to functions $z^k(x)$. The dynamical system (\ref{sec2:DS})
is then defined as the map that propagates such functions (or their finite-difference 
approximations)  from one observation instance to the next
in accordance with the specified numerical method. Here we assume that observations
are taken in intervals of $\Delta t_{\rm obs} = N_{\rm out} \Delta t$ with 
$N_{\rm out} \ge 1$ a fixed integer. The index $n\ge 1$ in (\ref{sec2:DS}) 
is the counter for those observation instances.

In other words, forecast or analysis ensemble
members, $z^{f/a}(x)$, now become functions of $x\in \mathbb{R}$, belong to some
appropriate function space ${\cal H}$, and the dynamical system
(\ref{sec2:DS}) is formally replaced by a map or evolution equation 
on ${\cal H}$ \citep{sr:robinson}. For simplicity of exposition we
assume periodic boundary conditions, \emph{i.e.},~$z(x) = z(x+L)$ for
some appropriate $L>0$.

The curse of dimensionality \citep{sr:bengtsson08} implies that,
generally speaking, none of the LETFs discussed so far is suitable for
data assimilation of spatially extended systems. In order to overcome
this situation, we now discuss the
concept of \emph{localization} as first introduced in 
\cite{sr:houtekamer01,sr:houtekamer05} for EnKFs. While we will focus on a particular
localization, called \emph{R-localization}, suggested by \cite{sr:hunt07}, our methodology
can be extended to \emph{B-localization} as proposed by \cite{sr:hamill01}.

In the context of the LETFs R-localization amounts to modifying (\ref{sec5:transform}) to
\[
z^{a}_j(x) = \sum_{i=1}^M z_i^{f}(x) s_{ij}(x),
\]
where the associated transform matrices $S(x) \in \mathbb{R}^M \times \mathbb{R}^M$ 
depend now on the spatial location $x \in [0,L]$. It is crucial that
the transformation matrices $S(x)$ are sufficiently smooth in $x$ in
order to produce analysis ensembles with sufficient regularity for the
evolution problem under consideration and, in particular $z^a_j \in
{\cal H}$. In case of an SMCM with importance resampling, the resulting $S(x)$
would, in general, not even be continuous for almost all $x\in [0,L)$. Hence we only discuss
localization for the ESRF and the ETPF. 

Let us, for simplicity, assume that the forward
operator $h:{\cal H} \to \mathbb{R}^{N_y}$ for the observations $y_{\rm
  obs}$ is defined by
\[
h_k(z) = z({\rm x}_k), \qquad k=1,\ldots,N_y.
\]
Here the ${\rm x}_k\in [0,L)$ denote the spatial location at which the
observation is taken. The measurement errors are Gaussian with mean
zero and covariance matrix $R \in \mathbb{R}^{N_y\times N_y}$. We assume
for simplicity that $R$ is diagonal. 

In the sequel we assume that $z(x)$ has been extended to $x\in
\mathbb{R}$ by periodic extension from $x\in [0,L)$ and introduce 
time-averaged and normalized \emph{spatial correlation function}
\begin{equation} \label{sec5:spatialcorr}
C(x,s) := \frac{\sum_{n=0}^N  z^n(x+s)z^n(x) }{\sum_{n=0}^N (z^n(x))^2}
\end{equation}
for $x \in [0,L)$ and $s \in [-L/2,L/2)$. Here we have assumed that the underlying solution
process is stationary ergodic. In case of spatial homogeneity the spatial
correlation function becomes furthermore independent of $x$ for $N$ sufficiently large.

We also introduce a localization kernel ${\cal K}(x,x';r_{\rm loc})$ in order to define
$R$-localization for an ESRF and the ETPF. The localization kernel can be as simple as
\begin{equation} \label{SEC5:locrho1}
{\cal K}(x,x;r_{\rm loc})  = \left\{ \begin{array}{ll} 1-\frac{1}{2}
    s & \mbox{for} \,\, s \le 2,\\
0 & \mbox{else}, \end{array} \right.
\end{equation}
with 
\[
s := \frac{\min \{|x - x' -L|,|x-x'|,|x-x'+L|\}}{r_{\rm loc}} \ge 0,
\]
or a higher-order polynomial such as
\begin{equation} \label{sec5:locrho2}
{\cal K}(x,x';r_{\rm loc}) = \left\{ \begin{array}{ll} 1 - \frac{5}{3} s^2 + \frac{5}{8} s^3 + \frac{1}{2} s^4 -
\frac{1}{4} s^5  & \mbox{for} \,\, s \le 1,\\
-\frac{2}{3} s^{-1} + 4 - 5s + \frac{5}{3} s^2 + \frac{5}{8} s^3 - \frac{1}{2} s^4 + 
\frac{1}{12} s^5 & \mbox{for}\,\, 1 \le s \le 2,\\
0 & \mbox{else} . 
\end{array} \right. .
\end{equation}
See \cite{sr:gaspari99}.

In order to compute the transformation matrix
$S(x)$ for given $x$, we modify the $k$th diagonal entry in the
measurement error covariance matrix $R \in \mathbb{R}^{N_y\times N_y}$ and define
\begin{equation} \label{sec5:localization1}
\frac{1}{\tilde r_{kk} (x)} := \frac{{\cal K}(x,{\rm x}_k;r_{{\rm loc},R})}{r_{kk}}
\end{equation}
for $k=1,\ldots,N_y$. Given a localization radius $r_{{\rm loc},R}>0$,
this results in a matrix $\tilde R^{-1}(x)$ which replaces the $R^{-1}$
in an ESRF and the ETPF. 

More specifically, the LETKF is based on the following modifications
to the ESRF. First one replaces (\ref{sec5:eq:Stransform}) by
\[
Q(x) = \left\{ I +   \frac{1}{M-1} (A_y^f)^T \tilde R^{-1}(x)  A_y^f  \right\}^{-1}
\]
and defines $D(x) = Q(x)^{1/2}$. Finally the localized transformation
matrix $S(x)$ is given by
\begin{equation} \label{sec5:LETKF}
s_{ij}(x) = \frac{1}{M-1} q_{ij}(x) (y^f_j - \bar y^f)
\tilde R^{-1}(x) (y_{\rm obs} - \bar y^f) + d_{ij}(x),
\end{equation}
which replaces (\ref{sec5:letkf}). We mention that
\cite{sr:anderson12b} discusses practical methods for choosing 
the localization radius $r_{{\rm loc},R}$ for EnKFs.

In order to extend the concept of $R$-localization to the ETPF, we also
define the localized cost function
\begin{equation} \label{sec5:localization2}
c_x(z^f,z^a) = \int_0^L {\cal K}(x,x';r_{{\rm loc},c})
\|z^f(x')-z^a(x')\|^2 {\rm d}x'
\end{equation}
with a localization radius $r_{{\rm loc},c}\ge 0$, which can be chosen
independently from the localization radius for the measurement error
covariance matrix $R$.

The ETPF with R-localization can now be implemented as follows.
At each spatial location $x\in [0,L)$ one determines the 
desired transformation matrix  $S(x)$ by first computing the weights
\begin{equation} \label{sec5:locweights}
w_i \propto e^{-\frac{1}{2}(h(z_i^f) - y_{\rm obs})^T \tilde
  R^{-1}(x) (h(z_i^f)-y_{\rm obs}) }
\end{equation}
and then minimizing the cost function
\begin{equation} \label{sec5:locdist}
J(T) = \sum_{i,j=1}^M c_x(z_i^f,z_j^f) t_{ij}
\end{equation}
over all admissible couplings. One finally sets $S(x) = M T^\ast$.

As discussed earlier any infinite-dimensional evolution equation such as the linear
advection equation will be truncated in practice to a computational
grid $x_j = j\Delta x$. The transform matrices $S(x)$ need then to be
computed for each grid point only and the integral in
(\ref{sec5:localization2}) is replaced by a simple Riemann sum. 

We mention that alternative filtering strategies for spatio-temporal processes have been
proposed by \cite{sr:majda} in the context of turbulent systems. One of their strategies
is to perform localization in spectral space in case of regularly spaced observations. 
Another spatial localization strategy for particle filters can be found in \cite{sr:vanhandel14}. 
%The work of \cite{sr:snyder13} leads to alternative implementations of
%SMCMs with localization. 

%%%%%%%%%%%%%%%%%%%%%%%%%%%%%%%%%%%%%%%%%%%%%%%%%%%%%%%%%%%%%%%%%

%%%%%%%%%%%%%%%%%%%%%%%%%%%%%%%%%%%%%%%%%%%%%%%%%%%%%%%%%%%%%%%%%%%%%%%%
%
%
%
%
%%%%%%%%%%%%%%%%%%%%%%%%%%%%%%%%%%%%%%%%%%%%%%%%%%%%%%%%%%%%%%%%%%%%%%%%

\section{Applications}
\label{sec:6}

In this section we present some numerical results comparing the
different LETFs for the chaotic Lorenz-63 \citep{sr:lorenz63} and
Lorenz-96 \citep{sr:lorenz96} models. While the highly nonlinear
Lorenz-63 model can be used to investigate the behavior of different
DA algorithms for strongly non-Gaussian distributions, the forty
dimensional Lorenz-96 model is a prototype ``spatially extended''
system which demonstrates the need for localization in order to achieve
skillful filter results for moderate ensemble sizes. We begin with the
Lorenz-63 model.

We mention that theoretical results on the long time behavior of filtering algorithms for chaotic
systems, such as the Lorenz-63 model, have been obtained, for example, by
\cite{sr:hunt13} and \cite{sr:law13}.

\subsection{Lorenz-63 model}

The Lorenz-63 model is given by the differential equation
(\ref{sec2:ODE}) with state variable $z = ({\rm x},{\rm y},{\rm z})^T
\in \mathbb{R}^3$, right hand side
\begin{align*}
f(z) &= \left( \begin{array}{l} \sigma ({\rm y}-{\rm x})\\
{\rm x}(\rho-{\rm z}) - {\rm y} \\ {\rm xy} - \beta {\rm z}
\end{array} \right),
\end{align*}
and parameter values $\sigma = 10$, $\rho = 28$, and $\beta = 8/3$.
The resulting ODE (\ref{sec2:ODE}) is discretized in time by the
implicit midpoint method \citep{sr:ascher08}, \emph{i.e.},
\begin{equation} \label{sec5:IM}
z^{n+1} = z^n + \Delta t f(z^{n+1/2}) ,\qquad z^{n+1/2} = \frac{1}{2}
(z^{n+1} + z^n)
\end{equation}
with step-size $\Delta t = 0.01$. Let us abbreviate the resulting map
by $\Psi_{\rm IM}$. Then the dynamical system (\ref{sec2:DS}) is
defined as
\[
\Psi = \Psi_{\rm IM}^{[12]}.
\]
In other words observations are assimilated every 12 time-steps. We
only observe the ${\rm x}$ variable with a Gaussian measurement error
of variance $R = 8$. 

We used different ensemble sizes from $10$ to $80$ as well
as different inflation factors ranging from $1.0$ to $1.12$ by
increments of $0.02$ for the
EnKF and rejuvenation parameters ranging from $0$ to $0.4$ by
increments of $0.04$ for the ETPF. Note that a rejuvenation parameter
of $h = 0.4$ corresponds to an inflation factor $\alpha = \sqrt{1+h^2} \approx 1.0770$.

The following variant of the ETPF with localized cost function has also
been implemented. We first compute the importance weights $w_i$ of a given
observation. Then each component of the state vector is updated using
only the distance in that component in the cost function $J(T)$. 
For example, the ${\rm x}_i^f$ components of the forecast 
ensemble members $z_i^f = ({\rm x}_i^f,{\rm y}_i^f,{\rm z}_i^f)^T$,
$i=1,\ldots,M$, are updated according to
\[
{\rm x}_i^a = M \sum_{i=1}^M {\rm x}_i^f t_{ij}^\ast
\]
with the coefficients $t_{ij}^\ast\ge 0$ minimizing the cost function
\[
J(T) = \sum_{i,j=1}^M t_{ij} |{\rm x}_i^f - {\rm x}_j^f|^2
\]
subject to (\ref{sec5:SMCM1c}). We use ETPF\_R0 as the shorthand form for this
method. This variant of the ETPF is of special interest from a 
computational point of view since the linear transport problem in $\mathbb{R}^3$
reduces to three simple one-dimensional problems. 

\begin{figure}
  \centering
  \subfigure{\includegraphics[clip=TRUE,scale=.3,trim =.5cm 0cm .5cm 0cm]{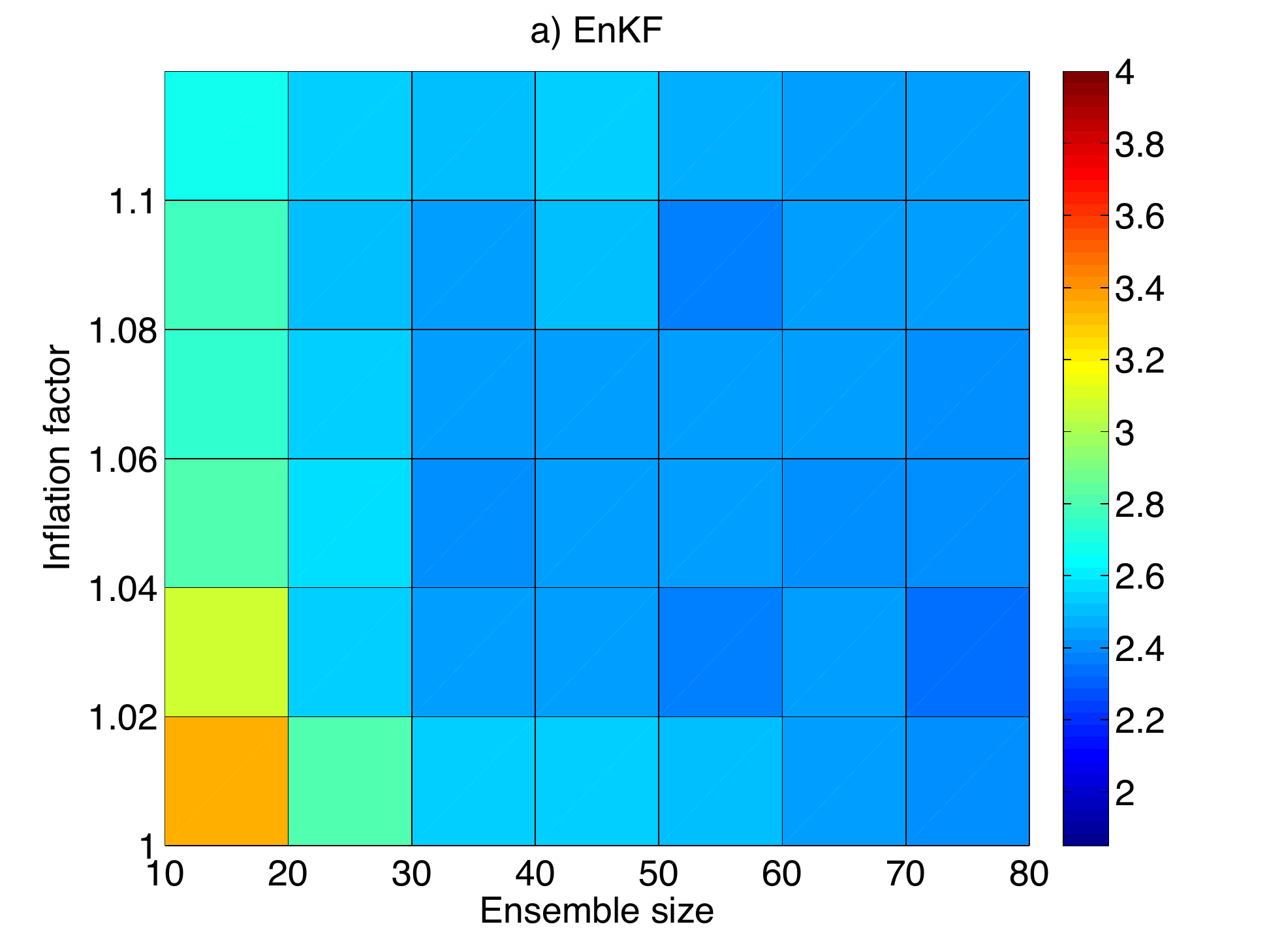}}
  \hfill
  \subfigure{\includegraphics[clip=TRUE,scale=.3,trim =.5cm 0cm .5cm 0cm]{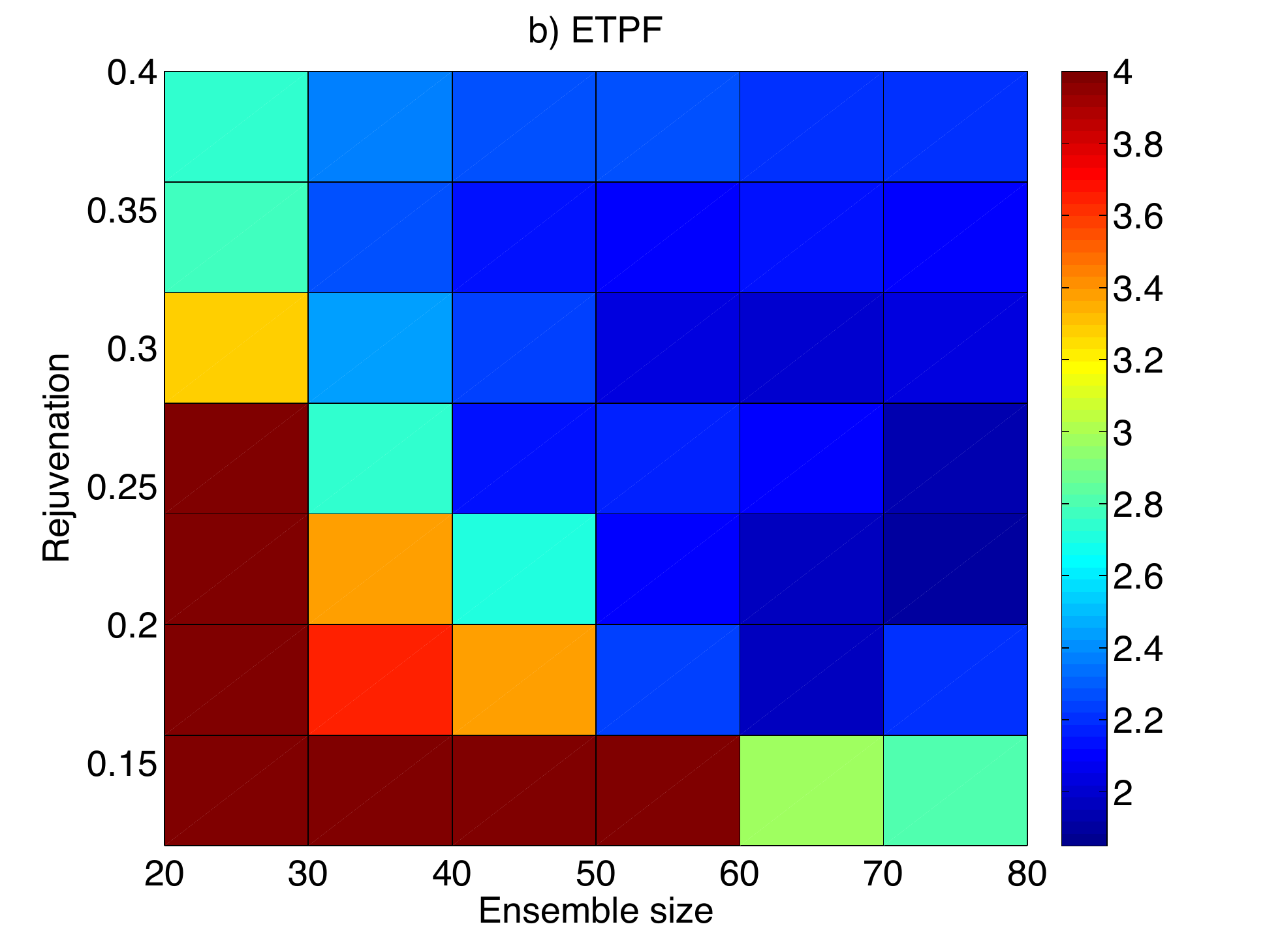}}
  \hfill
  \subfigure{\includegraphics[clip=TRUE,scale=.3,trim =0.5cm 0cm .5cm
    0cm]{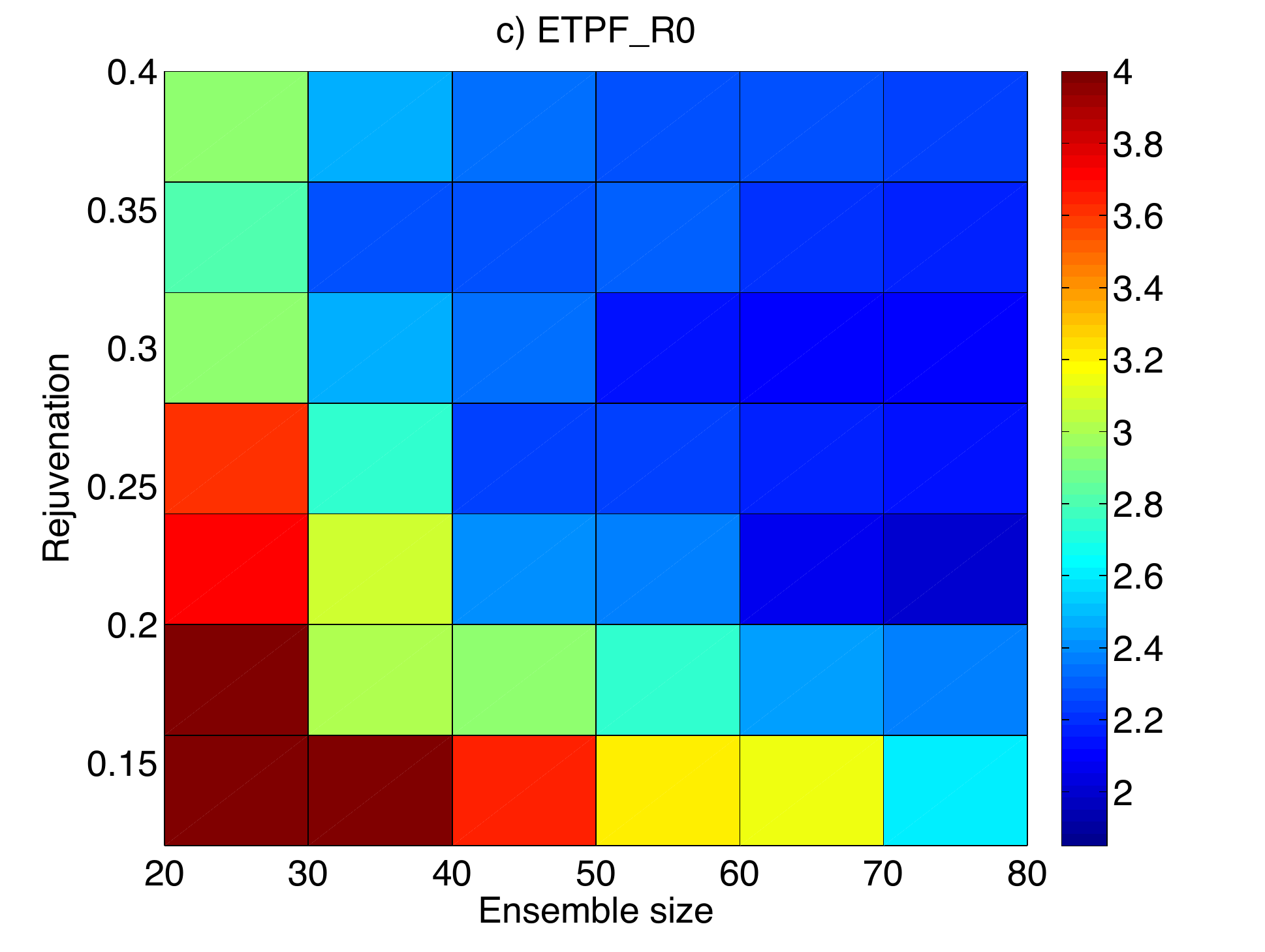}}
  \hfill
  \subfigure{\includegraphics[clip=TRUE,scale=.3,trim =0.5cm 0cm .5cm
    0cm]{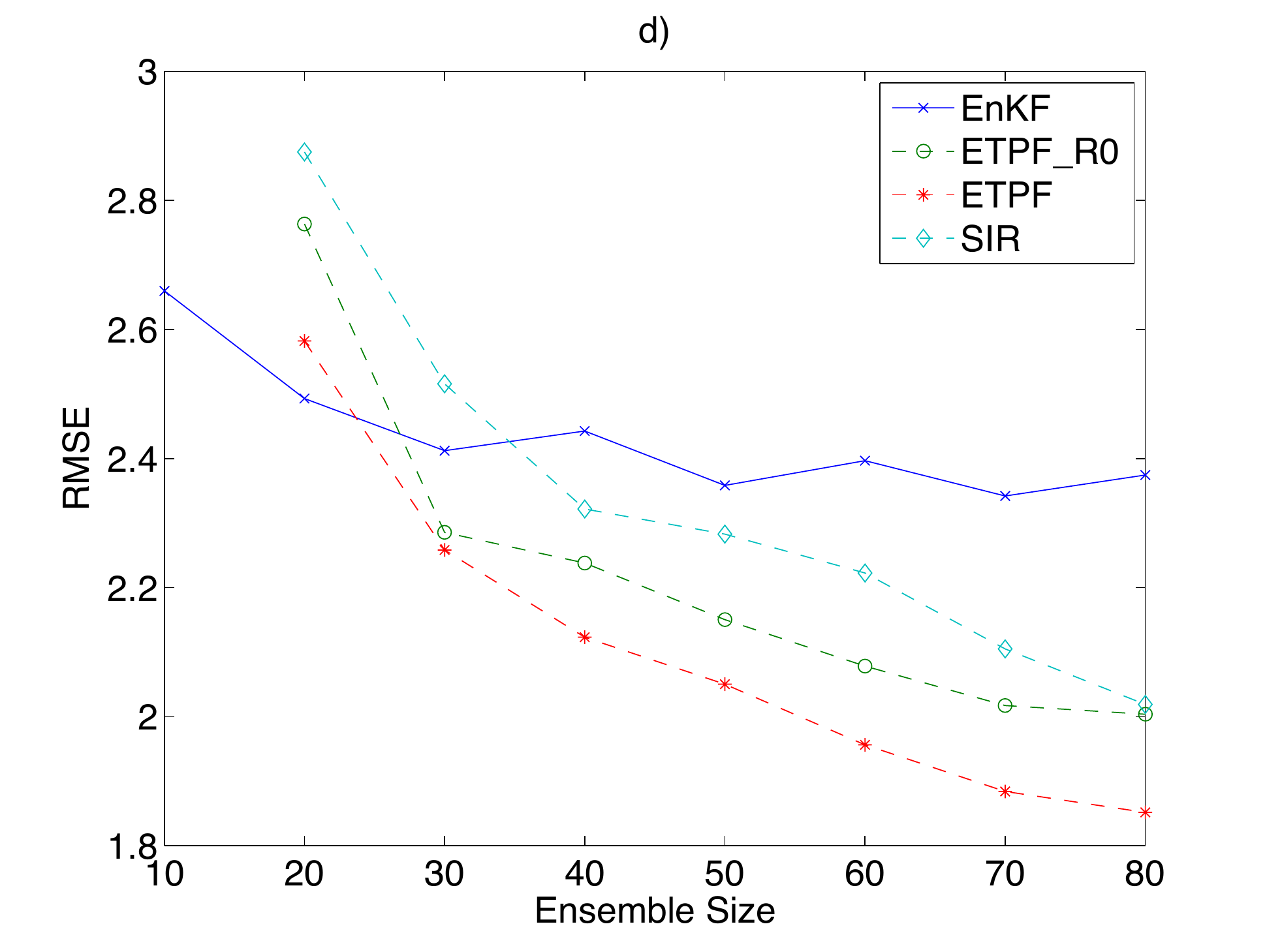}}
  \caption{a)-c): Heatmaps showing the RMSEs for different
    parameters for the EnKF, ETPF and ETPF\_R0 for the Lorenz-63
    model. d): RMSEs for different ensemble sizes using 'optimal'
    inflation factors and rejuvenation.} 
  \label{L63} 
\end{figure}

The model is run over $N=20,000$ assimilation steps after discarding
200 steps to lose the influence of the initial conditions. The
resulting \textit{root-mean-square errors} averaged over time (RMSEs)
\[
\text{RMSE} = \frac1N\sum_{n = 1}^{N}\sqrt{\|\bar{z}^{n,a}-z_{\text{ref}}^n\|^2} 
\]
 are reported in
 Fig. 2 a)-c). We dropped the results for the ETPF and ETPF\_R0 with
 ensemble 
size $M=10$ as they indicated strong divergence. We see that the EnKF
produces stable results while the other filters are more
sensitive to different choices for the rejuvenation
parameter. However, with increasing ensemble size and 'optimal' choice
of parameters the ETPF and the ETPF\_R0 outperform the EnKF which
reflects the biasedness of the EnKF. 

Fig. 2 d) shows the RMSEs for
each ensemble size using the parameters that yield the lowest
RMSE. Here we see again that the stability of the EnKF leads to good 
results even for very small ensemble sizes. The downside is also
evident: While the ETPFs fail to track the reference solution 
as well as the EnKF for very small ensemble sizes a small increase
leads to much lower RMSEs. The asymptotic consistent ETPF outperforms
the ETPF\_R0 for large ensemble sizes but is less stable otherwise. We
also included RMSEs for the SIR filter with rejuvenation 
parameters chosen from the same range of values as for the
ETPFs. Although not shown here, this range seems to cover the
'optimal' choice for the rejuvenation parameter. The comparison with
the EnKF is as expected: for small ensemble sizes the SIR performs
worse but beats the EnKF for larger ensemble sizes due to its
asymptotic consistency. However, the equally consistent ETPF yields
lower RMSEs throughout for the ensemble sizes considered
here. Interestingly, the SIR only catches up with the inconsistent but
computationally cheap ETPF\_R0 for the largest ensemble size in this
experiment. We mention that the RMSE drops to around 1.4 with the SIR
filter with an ensemble size of 1000. 

At this point we note that the computational burden increases
considerably for the ETPF for larger ensemble sizes due to the need of
solving increasingly large linear transport problems. See the discussion 
from Section \ref{secETPF}.
%Note that slight irregularities in the results can be blamed to the
%chaotic behavior of the Lorenz-63 system.        

\subsection{Lorenz-96 model}

Given a periodic domain $x \in [0,L]$ and $N_z$ equally spaced grid-points
$x_j = j\Delta x$, $\Delta x = L/N_z$, we denote by $u_j$ the approximation to $z(x)$ at
the grid points $x_j$, $j=1,\ldots,N_z$. The following system of
differential equations
\begin{equation} \label{sec5:L96} 
\frac{{\rm d}u_j}{{\rm d} t} = -\frac{u_{j-1}u_{j+1}-u_{j-2}u_{j-1}}{3\Delta x}
- u_j + F, \qquad j = 1,\ldots,40,
\end{equation}
is due to \cite{sr:lorenz96} and is called the Lorenz-96 model. We set
$F=8$ and apply periodic boundary conditions $u_j = u_{j+40}$. The
state variable is defined by $z = (u_1,\ldots,u_{40})^T \in \mathbb{R}^{40}$.
The Lorenz-96 model (\ref{sec5:L96}) can be seen as a coarse spatial 
approximation to the PDE
\[
\frac{\partial u}{\partial t} =  -\frac{1}{2} \frac{\partial (u)^2}{
\partial x} - u + F, \qquad x \in [0,40/3],
\]
with mesh-size $\Delta x = 1/3$ and $N_z=40$ grid points. The implicit
midpoint method (\ref{sec5:IM}) is used with a step-size of $\Delta t =
0.005$ to discretize the differential equations (\ref{sec5:L96})
in time. Observations are assimilated every 22 time-steps and we
observe every other grid point with a Gaussian measurement error of
variance $R=8$. The large assimilation interval and variance of the 
measurement error are chosen because of a desired non-Gaussian ensemble 
distribution. 

We used ensemble sizes from 10 to 80, inflation factors from $1.0$ to
$1.12$ with increments of $0.02$ for the EnKF and rejuvenation
parameters between $0$ and $0.4$ with increments of $0.05$ 
for the ETPFs. 

As mentioned before, localization is required and we
take (\ref{sec5:locrho2}) as our localization kernel. For each value
of $M$ we fixed a localization radius $r_{{\rm loc},R}$ in
(\ref{sec5:localization1}). The particular choices can be
read off of the following table: 

\begin{table}
  \centering
  \begin{tabular}{c *8{ |c }}
    \hline
    M& 10&20&30&40&50&60&70&80 \\
    \hline \hline
    $r_{\text{loc},R}^{EnKF}$ &2&4&6&6&7&7&8&8 \\
    \hline
    $r_{\text{loc},R}^{ETPF}$&1&2&3&4&5&6&6&6 \\
    \hline 
  \end{tabular}
\end{table}

These values have been found by trial and error and
we do not claim that these values are by any means 'optimal'.

As for localization of the cost function
(\ref{sec5:locdist}) for the ETPF we used the same kernel as for the
measurement error and implemented different versions of the localized
ETPF which differ in the choice of the localization radius: ETPF\_R1
corresponds to the choice of $r_{\text{loc},c}= 1$ and ETPF\_R2 is
used for the ETPF with $r_{\text{loc},c}=2$. As before
we denote the computationally cheap version with cost function
$c_{x_j}(z^f,z^a) = |u_j^f - u_j^a|^2$ at grid point $x_j$ by ETPF\_R0.

\begin{figure}
  \centering
  \includegraphics[scale = .3]{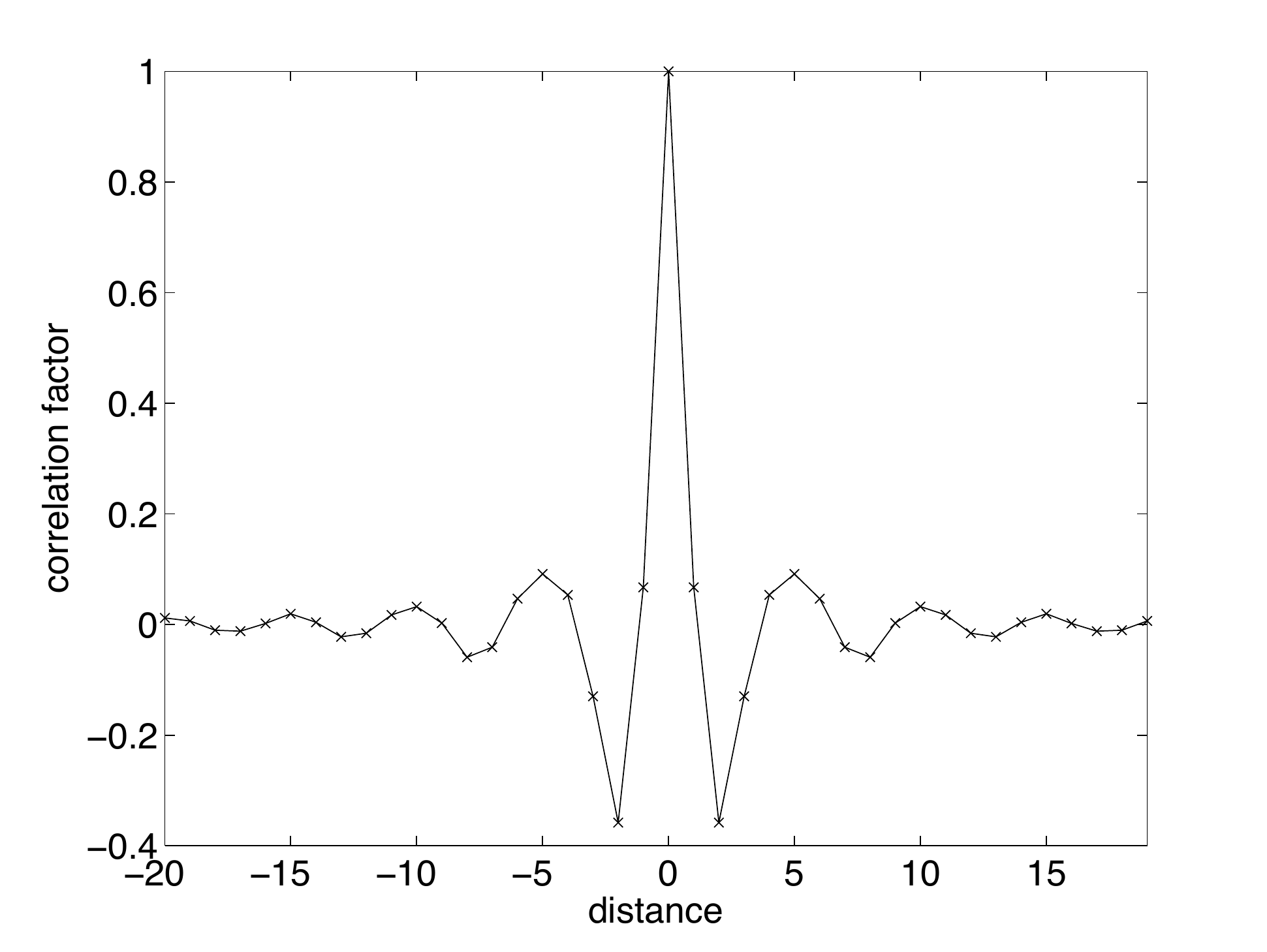}
  \caption{Time averaged spatial correlation between solution components depending on their distance.}
\end{figure}

The localization kernel as well as the localization radii
$r_{\text{loc},c}$ are not chosen by any optimality criterion but
rather by convenience and simplicity. A better kernel or localization
radii may be derived from looking at the time averaged spatial correlation
coefficients (\ref{sec5:spatialcorr}) as shown in Fig. 3. Our kernel
gives higher weights to components closer to the one to be
updated, even though the correlation with the immediate neighbor is
relatively low. 

\begin{figure}
  \centering
  \subfigure{\includegraphics[clip=TRUE,scale=.3,trim =.5cm 0cm .5cm 0cm]{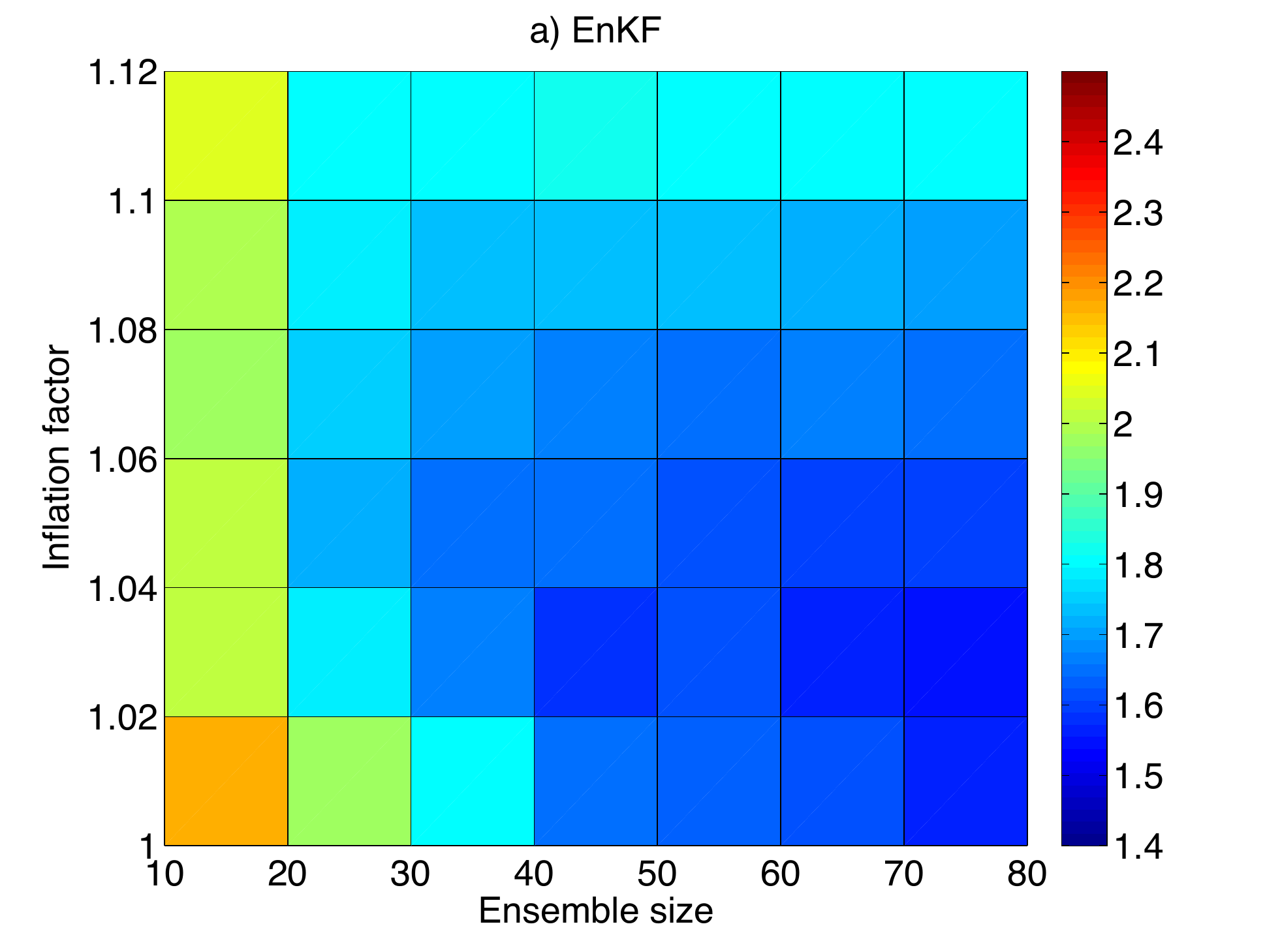}}
  \hfill
  \subfigure{\includegraphics[clip=TRUE,scale=.3,trim =.5cm 0cm .5cm 0cm]{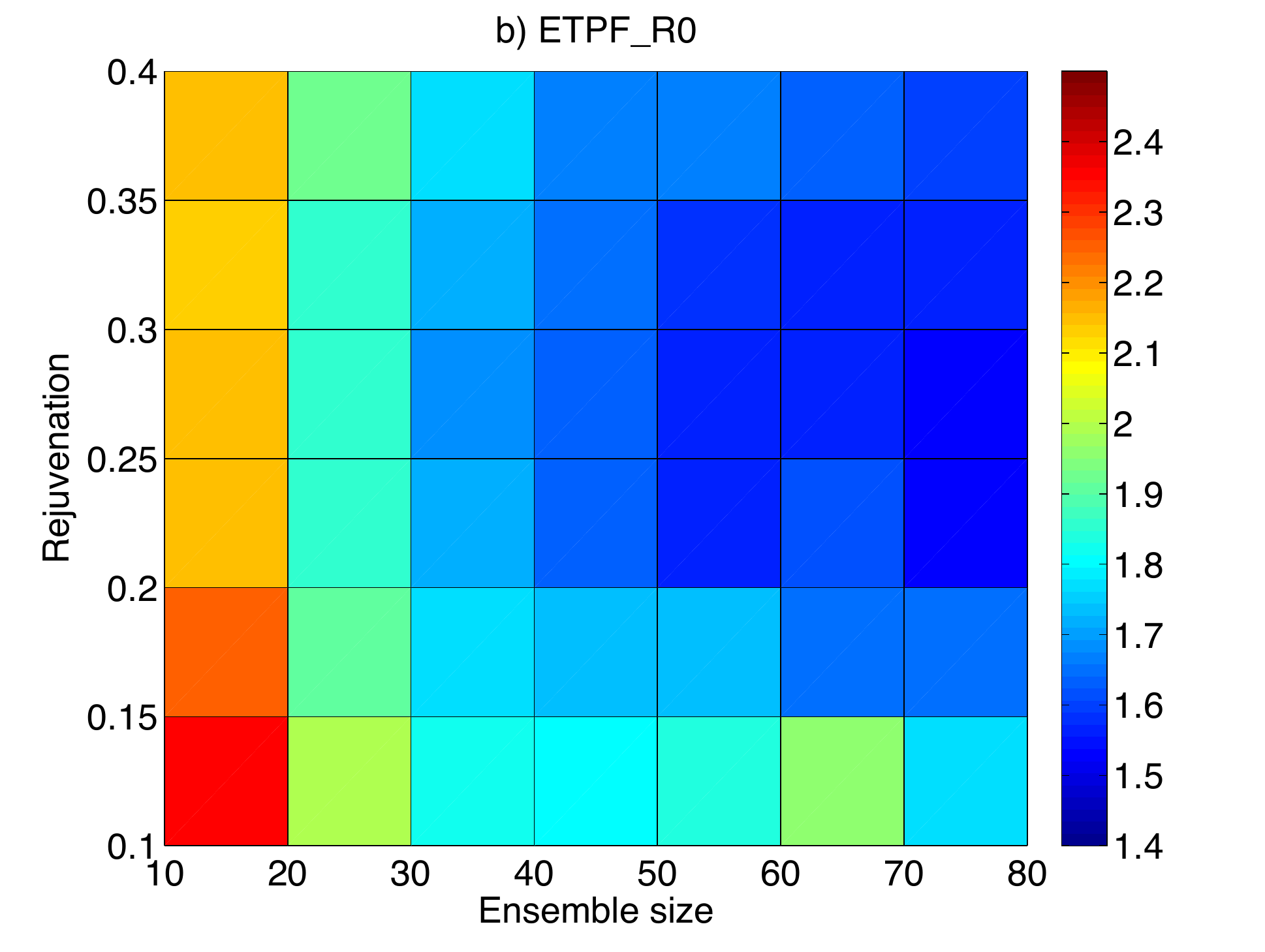}}
  \hfill
  \subfigure{\includegraphics[clip=TRUE,scale=.3,trim =0.5cm 0cm .5cm
    0cm]{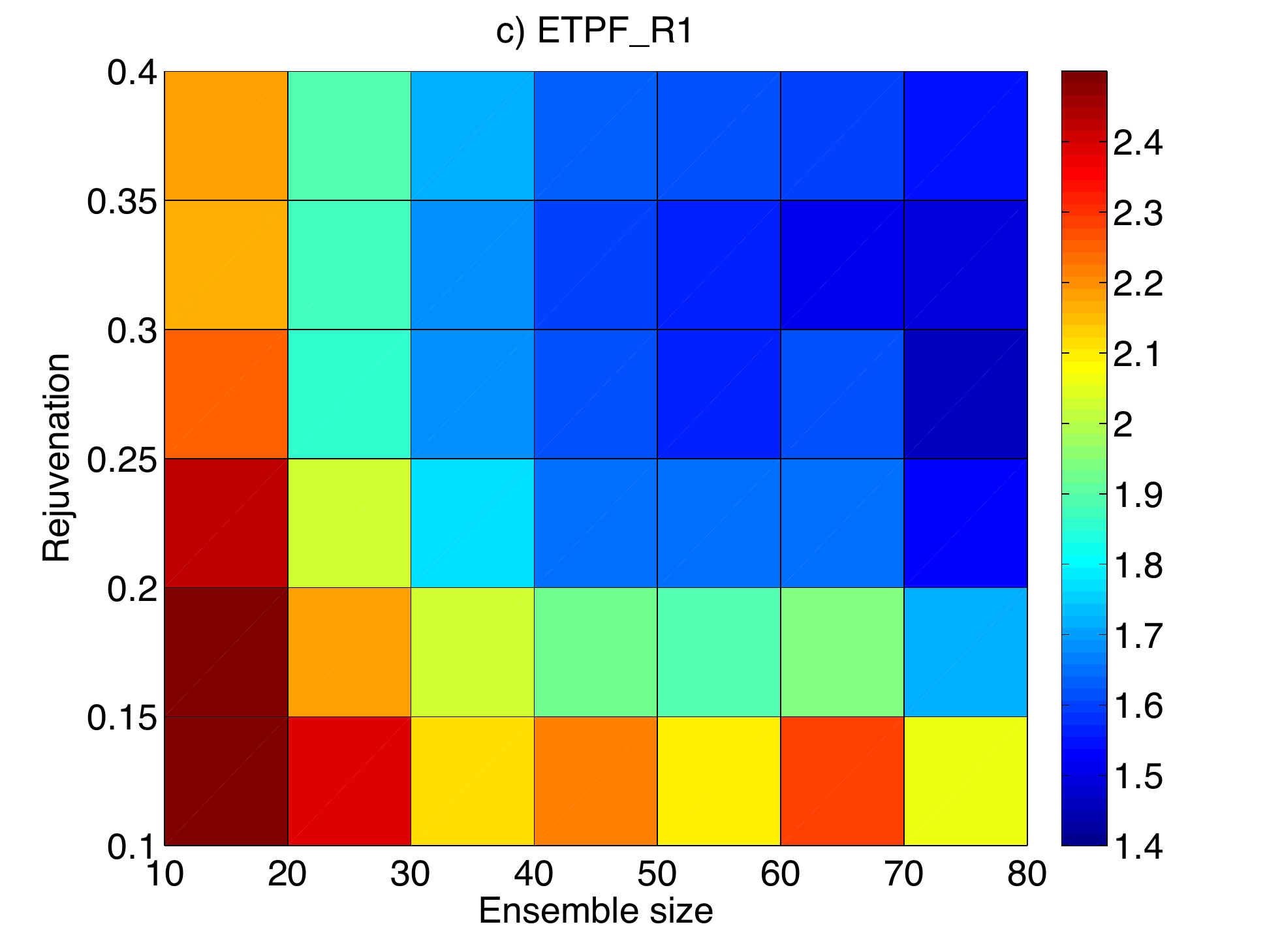}}
  \hfill
  \subfigure{\includegraphics[clip=TRUE,scale=.3,trim =0.5cm 0cm .5cm
    0cm]{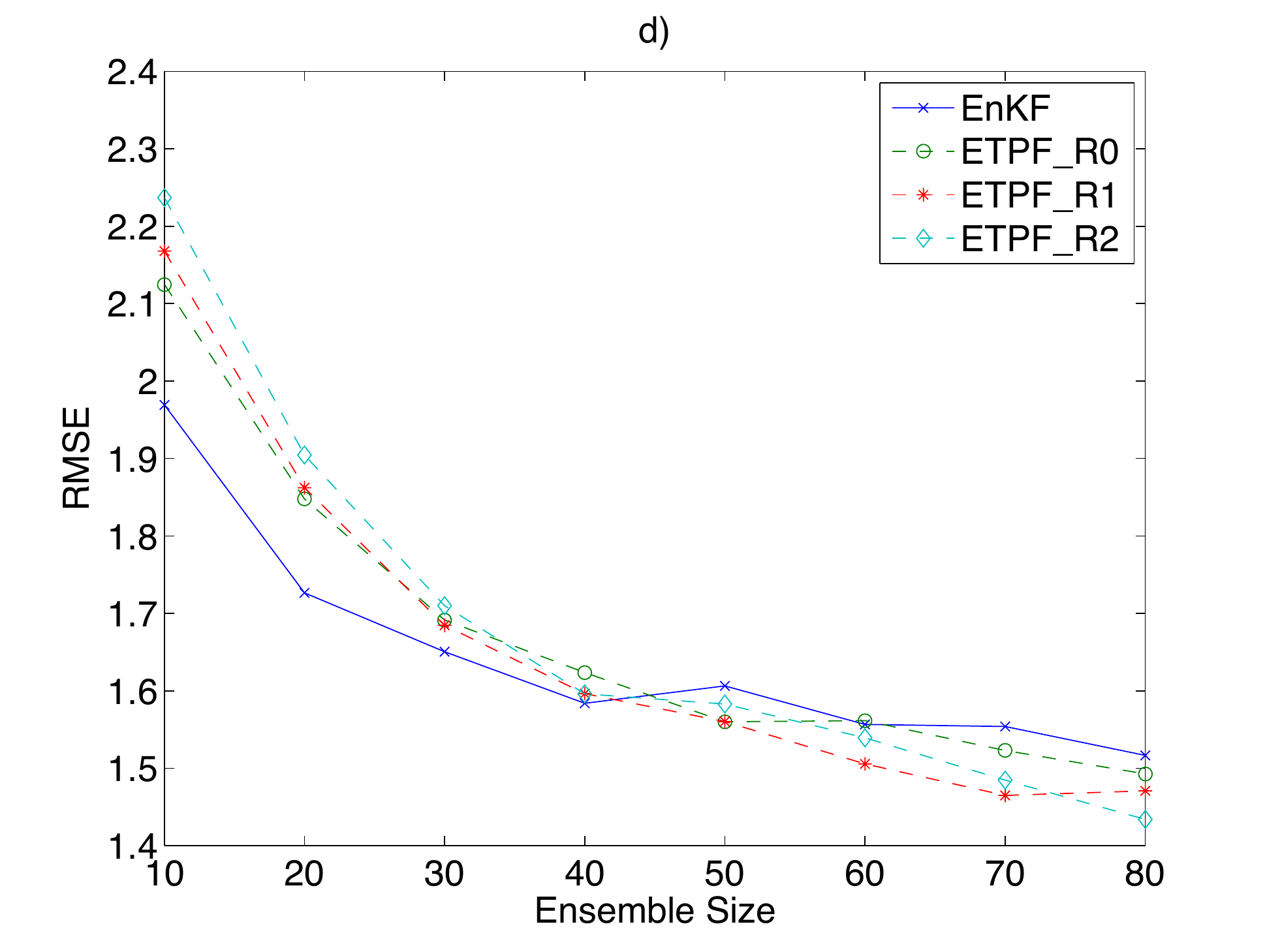}}
  \caption{a)-c): Heatmaps showing the RMSEs for different
    parameters for the EnKF, ETPF\_R0 and ETPF\_R1 for the Lorenz-96
    model. d): RMSEs for different ensemble sizes using 'optimal'
    inflation factors and rejuvenation.}
\end{figure}

The model is run over $N=10,000$ assimilation steps after discarding 500
steps to loose the influence of the initial conditions. The resulting time averaged
RMSEs are displayed in Fig. 4. We dropped the results for the
smallest rejuvenation parameters as the filters showed strong
divergence. Similar to the results for the Lorenz-63 model the EnKF
shows the most stable overall performance for various parameters but
fails to keep up with the ETPFs for higher ensemble sizes, though the
difference between the different filters is much smaller than for the
Lorenz-63 system. This is no surprise since the Lorenz-96 system does
not have the highly non-linear dynamics of the Lorenz-63 system which
causes the involved distributions to be strongly non-Gaussian. The
important point here is that the ETPF as a particle filter is able to
compete with the EnKF even for small ensemble sizes. Traditionally, high
dimensional systems required very high ensemble sizes for particle
filters to perform reasonably well. Hundreds of particles are
necessary for the SIR to be even close to the true state.

%%%%%%%%%%%%%%%%%%%%%%%%%%%%%%%%%%
%
%
%
%%%%%%%%%%%%%%%%%%%%%%%%%%%%%%%%%%

\section{Historical comments}

The notion of data assimilation has been coined in the field of meteorology and more widely
in the geosciences to collectively denote techniques for combining computational models 
and physical observations in order to estimate the current state of the atmosphere or any other geophysical process.  The perhaps first occurrence of the concept of data assimilation
can be found in the work of \cite{sr:Richardson}, where observational data needed to be interpolated
onto a grid in order to initialize the computational forecast process. With the rapid 
increase in computational resolution starting in the 1960s, it became quickly necessary
to replace simple data interpolation by an optimal combination of first guess estimates and 
observations. This gave rise to techniques such as the \emph{successive correction method},
\emph{nudging}, \emph{optimal interpolation} and \emph{variational least square techniques}
(3D-Var and 4D-Var). See \cite{sr:daley,sr:kalnay} for more details.  

\cite{sr:leith74} proposed \emph{ensemble} (or Monte Carlo) \emph{forecasting} as an 
alternative to conventional single forecasts. However ensemble forecasting did
not become operational before 1993 due to limited computer resources \citep{sr:kalnay}.  
The availability of ensemble forecasts subsequently lead to the invention of the 
EnKF by \cite{sr:evensen94} with a later correction by \cite{sr:burgers98} and many
subsequent developments, which have been summarized in \cite{sr:evensen}.
We mention that the analysis step of an EnKF with perturbed observations is closely related to 
a method now called \emph{randomized likelihood method} \citep{sr:kitanidis95,sr:oliver96,sr:oliver96b}.

In a completely independent line of research the problem of
optimal estimation of stochastic processes from data has led to the theory of \emph{filtering}
and \emph{smoothing},  which started with the work of \cite{sr:wiener}. The state space approach
to filtering of linear systems gave rise to the celebrated \emph{Kalman filter} and 
more generally to the \emph{stochastic PDE formulations} of Zakai and Kushner-Stratonovitch
in case of continuous-time filtering. See \cite{sr:jazwinski} for the theoretical developments
up to 1970. Monte Carlo techniques were first introduced to the filtering problem by
\cite{sr:handschin69}, but it was not until the work of \cite{sr:gordon93} that the 
SMCM became widely used \citep{sr:Doucet}.  The \emph{McKean interacting particle approach} 
to SMCMs has been pioneered by \cite{sr:DelMoral}. The theory of particle filters for time-continuous filtering problems is summarized in \cite{sr:crisan}. 

Standard SMCMs suffer from the \emph{curse of dimensionality}
in that the necessary number of ensemble members $M$ increases exponentially with
the dimension $N_z$ of state space \citep{sr:bengtsson08}. This limitation has prevented 
SMCMs from being used in meteorology and the geosciences. On the other hand, it is known 
that EnKFs lead to inconsistent estimates which is problematic when multimodal forecast distributions are 
to be expected. Current research work is therefore focused on a theoretical understanding
of EnKFs and related sequential assimilation techniques (see, for example, 
\cite{sr:hunt13,sr:law13}), extensions of particle filters/SMCMs to PDE models 
(see, for example, \cite{sr:chorin12b,sr:leeuwen12,sr:beskov13,sr:snyder13}),
and Bayesian inference on function spaces (see, for example, \cite{sr:stuart10a,sr:CDRS09,sr:dashti13})
and hybrid variational methods such as by, for example, \cite{sr:bonavita12,sr:clayton13}.

A historical account of optimal transportation can be found in
\cite{sr:Villani2}. The work of \cite{sr:mccann95} provides the
theoretical link between the classic linear assignment problem and the
Monge-Kantorovitch problem of coupling PDFs. The ETPF is a
computational procedure for approximating such couplings using importance sampling and
linear transport instead.

%%%%%%%%%%%%%%%%%%%%%%%%%%%%%%%%%%%%%%%%%%%%%%%%%%%%%%%%%%%
%
%
%
%%%%%%%%%%%%%%%%%%%%%%%%%%%%%%%%%%%%%%%%%%%%%%%%%%%%%%%%%%%

\section{Summary and Outlook}

We have discussed various ensemble/particle-based algorithms for
sequential data assimilation in the context of LETFs. Our starting
point was the McKean interpretation of Feynman-Kac formulae. The
McKean approach requires a coupling of measures which can be discussed
in the context of optimal transportation. This approach leads
to the ETPF when applied in the context of SMCMs. We have furthermore
discussed extensions of LETFs to spatially extended systems in form of 
$R$-localization. 

The presented work can be continued along several lines. First, one
may replace the empirical forecast measure
\begin{equation} \label{sec8:emp}
\pi_{\rm emp}^f(z) := \frac{1}{M} \sum_{i=1}^M \delta(z-z_i^f),
\end{equation}
which forms the basis of SMCMs and the ETPF, by a \emph{Gaussian mixture}
\begin{equation} \label{sec8:GM}
\pi_{\rm GM}^f(z) := \frac{1}{M} \sum_{i=1}^M {\rm n}(z;z_i^f,B),
\end{equation}
where $B\in \mathbb{R}^{N_z\times N_z}$ is a given covariance matrix 
and
\[
{\rm n}(z;m,B) := \frac{1}{(2\pi)^{N_z/2} |B|^{1/2}} e^{ -
  \frac{1}{2} (z-m)^T B^{-1} (z-m)} . 
\]
Note that the empirical measure (\ref{sec8:emp}) is recovered in the limit
$B\to 0$.  While the weighted empirical measure
\[
\pi_{\rm emp}^a(z) :=  \sum_{i=1}^M w_i \delta(z-z_i^f)
\]
with weights given by (\ref{sec5:SMCM1b}) provides the analysis 
in case of an empirical forecast measure (\ref{sec8:emp}) and an 
observation $y_{\rm obs}$, a Gaussian mixture
forecast PDF (\ref{sec8:GM}) leads to an analysis PDF in form of a weighted Gaussian mixture
provided the forward operator $h(z)$ is linear in $z$. This fact allows one
to extend the ETPF to Gaussian mixtures. See \cite{sr:reichcotter14} for
more details. Alternative implementations of Gaussian mixture filters can, for example, 
be found in \cite{sr:stordal11} and \cite{sr:frei11}.

Second, one may factorize the likelihood function
$\pi_{Y^{1:N}}(y^{1:N}|z^{0,N})$ into $L> 1$ identical copies
\[
\hat \pi_{Y^{1:N}}(y^{1:N}|z^{0:N}) := \frac{1}{(2\pi)^{N_yN/2}
  |R/L|^{N/2}} \prod_{n=1}^N e^{- \frac{1}{2L} (h(z^n)-y^n)^T R^{-1} (h(z^n)-y^n) },
\]
\emph{i.e.},
\[
\pi_{Y^{1:N}}(y^{1:N}|z^{0,N}) = \prod_{l=1}^L
\hat \pi_{Y^{1:N}}(y^{1:N}|z^{0:N})
\]
and one obtains a sequence of $L$ ``incremental'' Feynman-Kac
formulae. Each of these formulae can be approximated numerically by any
of the methods discussed in this review. For example, one
obtains the continuous EnKF formulation of \cite{sr:br10} in the
limit $L\to \infty$  in case of an ESRF. We also mention the continuous Gaussian mixture 
ensemble transform filter \citep{sr:reich11}. An important advantage
of an incremental approach is the fact that the associated weights (\ref{sec5:SMCM1b})
remain closer to the uniform reference value $1/M$ in each iteration step. 
See also related methods such as \emph{running in place}
(RIP) \citep{sr:kalnayyang10}, the iterative EnKF approach of 
\cite{sr:bocquet12,sr:sakov12}, and the embedding approach of
\cite{sr:beskov13} for SMCMs.

Third, while this paper has been focused on discrete time
algorithms, most of the presented results can be extended to
differential equations with observations arriving continuously in time
such as
\[
{\rm d}y_{\rm obs}(t) = h(z_{\rm ref}(t)){\rm d} t + \sigma {\rm d}W(t),
\]
where $W(t)$ denotes standard Brownian motion and $\sigma> 0$
determines the amplitude of the measurement error. The associated marginal
densities satisfy the Kushner-Stratonovitch stochastic PDE
\citep{sr:jazwinski}. Extensions of the McKean approach to
continuous-in-time filtering problems can be found in \cite{sr:crisan10} and \cite{sr:meyn13}.
We also mention the continuous-in-time formulation of the EnKF by
\cite{sr:br11}. More generally, a reformulation of LETFs in terms of
continuously-in-time arriving observations is of the abstract
form
\begin{equation} \label{sec6:ip}
{\rm d}z_j    = f(z_j){\rm d}t + \sum_{i=1}^M z_i {\rm d}s_{ij},
\qquad j = 1,\ldots,M .
\end{equation}
Here $S(t) = \{s_{ij}(t)\}$ denotes a matrix-valued stochastic process which
depends on the ensemble $\{z_i(t)\}$ and the observations $y_{\rm
  obs}(t)$. In other words, (\ref{sec6:ip}) leads to a particular class of
interacting particle systems and we leave further investigations of
its properties for future research. We only mention that
the continuous-in-time EnKF formulation of \cite{sr:br11}
leads to
\[
{\rm d}s_{ij} = \frac{1}{M-1} (y_i-\bar y)\sigma^{-1}({\rm d}y_{\rm obs} -
y_j{\rm d}t + \sigma^{1/2} {\rm d}W_j ),
\] 
where the $W_j(t)$'s denote standard Brownian motion, $y_j = h(z_j)$,
and $\bar y = \frac{1}{M}\sum_{i=1}^M y_i$. See als
\cite{sr:akir11} for related reformulations of ESRFs.

%%%%%%%%%%%%%%%%%%%%%%%%%%%%%%%%%%%%%%%%%%%%%%%%%%%%%%%%%%%
\begin{acknowledgement}
We would like to thank Yann Brenier, Dan Crisan, Mike Cullen and Andrew Stuart
for inspiring discussions on ensemble-based filtering methods and the theory of 
optimal transportation. 
\end{acknowledgement}
%

%\section*{Appendix}
%\addcontentsline{toc}{section}{Appendix}
%%
%%
%When placed at the end of a chapter or contribution (as opposed to at the end of the book), the numbering of tables, figures, and equations in the appendix section continues on from that in the main text. Hence please \textit{do not} use the \verb|appendix| command when writing an appendix at the end of your chapter or contribution. If there is only one the appendix is designated ``Appendix'', or ``Appendix 1'', or ``Appendix 2'', etc. if there is more than one.

\bibliographystyle{plainnat}
\bibliography{survey}

\end{document}